\documentclass[pdflatex,sn-mathphys-num]{sn-jnl}

\usepackage{graphicx}
\usepackage{multirow}
\usepackage{amsmath,amssymb,amsfonts}
\usepackage{amsthm}
\usepackage{mathrsfs}
\usepackage[title]{appendix}
\usepackage{xcolor}
\usepackage{textcomp}
\usepackage{manyfoot}
\usepackage{booktabs}
\usepackage{algorithm}
\usepackage{algorithmicx}
\usepackage{algpseudocode}
\usepackage{listings}
\usepackage[utf8]{inputenc}
\usepackage[T1]{fontenc}
\usepackage{natbib}

\theoremstyle{thmstyletwo}

\theoremstyle{thmstyleone}
\newtheorem{theorem}{Theorem}
\newtheorem{lemma}{Lemma}
\newtheorem{corollary}{Corollary}
\newtheorem{proposition}{Proposition}

\theoremstyle{thmstylethree}
\newtheorem{definition}{Definition}
\newtheorem{example}{Example}
\newtheorem{assumption}{Assumption}
\newtheorem{remark}{Remark}

\begin{document}
	
	\title[Jackson-type Inequalities in Weighted Besov Classes]{Jackson-type Inequalities with Explicit Weight Dependence for N-term Wavelet Approximation on Localized Weighted Besov Classes}

	\author*[1]{\fnm{Kai-Cheng} \sur{Wang}}\email{gtotony98@gmail.com}

	\affil*[1]{\orgdiv{Department of Mathematics and Applied Mathematics}, \orgname{School of Information Engineering, Sanming University}, \orgaddress{\street{No. 25, Jing Dong Road}, \city{Sanming City}, \postcode{365004}, \state{Fujian}, \country{China}}}
	
\abstract{We prove a Jackson-type inequality for approximation by a
	prescribed number of terms of a band-limited biorthogonal wavelet system
	in homogeneous weighted Besov spaces with Muckenhoupt weights, on
	functions recovered from their wavelet expansion along the dyadic tree of
	the unit cube. The error decays at a rate set by the smoothness-to-dimension
	ratio. The explicit constant has a closed-form majorant, and the weight
	enters only through a displayed power of the Muckenhoupt characteristic.
	An extremal family shows that the exponent of this rate cannot be
	increased on the class, and an unweighted example shows that the
	localization is necessary.}

\keywords{Jackson-type inequality,
	Weighted Besov space,
	Muckenhoupt weight,
	Biorthogonal Riesz wavelet basis,
	$N$-term approximation,
	Explicit constant}
	
\pacs[MSC Classification]{41A17, 41A25, 42C40, 42B35, 46E35}
	
	\maketitle
	
	\section{Introduction}\label{sec1}
\everymath{\displaystyle}
Nonlinear $N$-term wavelet approximation occupies a central role in modern
approximation theory. Given a function in a smoothness class, the question
is how accurately it may be approximated in a prescribed norm by a linear
combination of at most $N$ elements of a fixed wavelet system, the
selection of indices and the choice of coefficients being allowed to depend
on the function. For the target space $L^p(\mathbb{R}^n)$ the classical
answer of DeVore, Jawerth, and Popov \cite{Ronal1992} identifies the rate
$N^{-s/n}$ for the Besov class $B^s_{\tau}(L^\tau)$ on the critical line
$1/\tau = s/n + 1/p$, and structural and quantitative refinements appear in
\cite{DeVore1993, Temlyakov2011, GARRIGOS200870, SICKEL2009748, Dinh2018}.
When the smoothness class and the target space share the integrability
exponent $p$, the situation is different. The rate $N^{-s/n}$ is then tied
to the number of wavelet coefficients that each scale carries. For
functions recovered from their wavelet expansion along the dyadic tree of
the unit cube, whose generation $j$ consists of $2^{jn}$ cubes, the rate is
attained already by truncation in scale.

Once a Muckenhoupt weight $w$ is introduced, the relevant Lebesgue scale
becomes $L^p_w$ and the corresponding smoothness scale becomes the
homogeneous weighted Besov space $\dot{B}^s_{p,q}(w)$. Structural results
in this direction were established by Bui et al.
\cite{BuiPaluszynskiTaibleson1996}, by Rychkov in the local Muckenhoupt
setting \cite{Vyachesla2001}, by Bownik in the anisotropic doubling-measure
framework \cite{Bownik2003, Bownik2005, Bownik2008}, and by Li et al. in the
weighted anisotropic Triebel--Lizorkin direction \cite{Li2011}. Parallel
developments employing variable-exponent and generalized scales appear in
\cite{Izuki2015, Izuki2020, KOGURE2024106037, kwok2016}. Boundedness of the $\varphi$-transform on anisotropic Besov spaces
with doubling measures is established in \cite[Theorem~3.5]{Bownik2005}. The Jackson-type estimate of the present paper passes from the Besov
norm to the wavelet coefficients only through the one-sided coefficient bound of Lemma~\ref{lem:coefficient-bound}, which
is proved directly with explicit constants.

The present paper addresses the quantitative side of the weighted Jackson
problem when the smoothness class is $\dot{B}^s_{p,q}(w)$ and the target
space is $L^p_w$ with the same exponent $p$. On the whole of
$\mathbb{R}^n$ an inequality of the form
$E_N(f)_{L^p_w} \le C N^{-s/n}\|f\|_{\dot{B}^s_{p,q}(w)}$ cannot be expected,
because a single scale of $\dot{B}^s_{p,q}(w)$ admits arbitrarily many
coefficients of comparable size. For $w\equiv 1$,
Remark~\ref{rem:localization-necessary} records an explicit family of this
kind, for which no inequality of this form holds. The inequality becomes available
for functions recovered in $L^p_w$ from their wavelet expansion along the
dyadic tree of the unit cube $Q_0=[0,1)^n$, namely along the indices
$(j,k)$ with $j\ge 0$ and $Q_{j,k}\subset Q_0$. For band-limited systems, whose generators have no
compact support, this localization at the level of coefficients is the
counterpart of approximation on a cube, and it is the setting of the main
result.

The main result, Theorem~\ref{thm:main1}, states that for every $f$ in the
localized class $\dot{B}^s_{p,q}(w;Q_0)$ of
Definition~\ref{def:localized-class} and every integer $N \ge 1$,
\begin{align}\label{eq:intro-jackson-bound}
	&E^{\mathcal{F}}_N(f)_{L^p_w}
	\le C_0\,[w]_{A_p}^{2/p}\,N^{-s/n}\,\|f\|_{\dot{B}^s_{p,q}(w)}
	\le C_0\,[w]_{A_p}^{2\theta(p)}\,N^{-s/n}\,\|f\|_{\dot{B}^s_{p,q}(w)},\\
	&\theta(p)= \tfrac{1}{2}\max\left\{1,\,(p-1)^{-1}\right\}.\nonumber
\end{align}
The factor $C_0$ is defined explicitly in \eqref{eq:constant-form}, and
Theorem~\ref{thm:main1} records a closed-form majorant of it. The factor $C_0$
depends on the dimension, the number of generators, the exponent $p$, the
smoothness index $s$, the finite-band index $M$ of \eqref{eq:finite-band}, which compares the Littlewood--Paley annulus with the frequency annulus of the generators, and two decay quantities of the synthesis and dual generators.
It does not depend on $N$, on $f$, on $q$, on $w$, or on the Riesz bounds of
the system. The weight enters only through the displayed power of
$[w]_{A_p}$, which is the product of two factors $[w]_{A_p}^{1/p}$. The
first arises in the synthesis of a single scale, and the second arises in
the passage from the Littlewood--Paley blocks of $f$ to its wavelet
coefficients. The second inequality in \eqref{eq:intro-jackson-bound}
follows from $[w]_{A_p}\ge 1$ and $2/p\le 2\theta(p)$. The exponents $2/p$ and $2\theta(p)$ coincide exactly
at $p=2$, where the two bounds in \eqref{eq:intro-jackson-bound} are identical.

\subsection{Originality: explicit constants on a localized class}
\label{subsec:originality}

\medskip
\noindent\textbf{A cube-by-cube single-scale estimate.} The engine of the
argument is the two-sided estimate of Lemma~\ref{lem:single-scale-estimate}.
For a single-scale wavelet sum
$r^{(j)}_\Lambda=\sum_{k\in\Lambda}c_{j,k}\psi_{j,k}$ it compares
$\|r^{(j)}_\Lambda\|_{L^p_w}$ with the weighted coefficient quantity
$2^{jn/2}\left(\sum_{k\in\Lambda}|c_{j,k}|^p\,w(Q_{j,k})\right)^{1/p}$ in
both directions, with the powers $[w]_{A_p}^{\pm\theta(p)}$. The proof
given here proceeds cube by cube. It invokes the $A_p$ condition only
through the two inequalities for $A_p$ weights on cubes recorded in
Proposition~\ref{prop:ap-cube}, it uses the generators through their decay
and the biorthogonality relation, and it yields the powers
$[w]_{A_p}^{-1/p}$ and $[w]_{A_p}^{1/p}$ in the lower and upper estimates. Since $[w]_{A_p}\ge 1$ and $1/p\le\theta(p)$, one has $[w]_{A_p}^{-\theta(p)}\le[w]_{A_p}^{-1/p}$ and $[w]_{A_p}^{1/p}\le[w]_{A_p}^{\theta(p)}$, so the refined estimate implies the lemma (Remark~\ref{rem:single-scale-two-sided}). The
constants $K_U$, $K_L$ of \eqref{eq:sse-refined} are explicit series in the
decay envelopes of the generators, and \eqref{eq:sse-KU}, \eqref{eq:sse-KL}
bound them in closed form.

\medskip
\noindent\textbf{No implicit characterization constant.} The passage from
the Besov norm of $f$ to its wavelet coefficients, which is needed for the
upper bound, is carried out directly in Lemma~\ref{lem:coefficient-bound}
with the power $[w]_{A_p}^{1/p}$. Consequently the constant in
\eqref{eq:intro-jackson-bound} contains no factor inherited from the
equivalence constants of a cited characterization, and every appearance of
$[w]_{A_p}$ is traceable to a single application of
Proposition~\ref{prop:ap-cube}.

\medskip
\noindent\textbf{A lower estimate and the role of the localization.} The
rate in \eqref{eq:intro-jackson-bound} is complemented by
Corollary~\ref{lem:lower-bound-constant}. For each $w\in A_p$, an explicit family of single-scale
wavelet sums $\{f^{*}_N\}_{N=2^{mn}}$ in the localized class, whose coefficients are normalized by the cube measures $w(Q_{j,k})$ of the generation $j=m+1$, satisfies
\begin{equation}\label{eq:intro-lower-bound}
	c_\flat\,[w]_{A_p}^{-2/p}
	\le
	\frac{E^{\mathcal{F}}_N(f^{*}_N)_{L^p_w}}
	{N^{-s/n}\,\|f^{*}_N\|_{\dot{B}^s_{p,q}(w)}}
	\le
	c_\sharp\,[w]_{A_p}^{2/p},
\end{equation}
with $c_\flat, c_\sharp$ independent of $N$ and $w$. Consequently the
exponent $s/n$ in \eqref{eq:intro-jackson-bound} cannot be replaced by a
larger one on the localized class. The construction is deterministic and
uses neither random signs nor Khinchine-type averaging. No claim is made
that the displayed power of $[w]_{A_p}$ is matched in order from below.
For $w\equiv 1$, Remark~\ref{rem:localization-necessary} shows that the
localization itself cannot be removed.

\medskip
\noindent\textbf{Two special-case checks.} For $w\equiv 1$ one has
$[w]_{A_p}=1$, and \eqref{eq:intro-jackson-bound} reduces to the unweighted bound
$E^{\mathcal{F}}_N(f)_{L^p}\le C_0N^{-s/n}\|f\|_{\dot{B}^s_{p,q}}$ on the localized class with $w\equiv 1$, and the approximant that yields this bound in the proof of Theorem~\ref{thm:main1} is the truncation in scale along the dyadic tree of $Q_0$, the coefficient-level counterpart of linear approximation on a cube. For $p=2$ one has $2/p=2\theta(2)=1$, and the
dependence on the Muckenhoupt characteristic is linear in $[w]_{A_2}$. Both
checks are read directly from \eqref{eq:intro-jackson-bound}.

\medskip
\noindent\textbf{Relation to prior work.} The standing
hypothesis on the wavelet system follows the biorthogonal framework of
\cite{Gun2015}, and the present estimates refine the Jackson-type analysis
of \cite{Wang2021} in the weighted setting. For linear biorthogonal
multiresolution projections on the real line, explicit direct and inverse
estimates on the inhomogeneous weighted Besov scale are obtained in
\cite{Wang2026Wavelet}, where the dependence on the weight is routed through
the norm of the Hardy--Littlewood maximal operator on $L^p_w$. The present
estimates concern band-limited systems on $\mathbb{R}^n$ and display the
power of $[w]_{A_p}$ itself. For localized shallow networks on Besov classes
over bounded Lipschitz domains of $\mathbb{R}^n$, a Jackson-type estimate
with the rate $N^{-s/n}$ in the width $N$ and an explicit constant, tracked
through the activation and the domain, is established in
\cite{ChenWang2026Shallow}.

\medskip
\noindent\textbf{Organization of the paper.} Section~\ref{sec2} fixes the
normalization $\psi_{j,k}(x)=2^{jn/2}\psi(2^jx-k)$ and the dyadic cubes
$Q_{j,k}=2^{-j}(k+[0,1)^n)$ through which all weighted quantities are
expressed. It also records the standing hypothesis on the wavelet system
and the weighted objects of the analysis. It closes with two inequalities
for $A_p$ weights on cubes (Proposition~\ref{prop:ap-cube}). Section~\ref{sec3} proves the
single-scale estimate and the coefficient bound, introduces the localized
class, and establishes the Jackson-type inequality together with the lower
estimate.

\section{Preliminaries}\label{sec2}

We begin by recalling the definitions and background material concerning
biorthogonal Riesz wavelet bases, weighted Besov spaces, Muckenhoupt $A_p$
weights, and the associated Littlewood--Paley decomposition. Familiarity
with the classical theory of wavelet bases and multiresolution analysis,
the theory of Besov and Triebel--Lizorkin spaces, and harmonic analysis in
the presence of Muckenhoupt weights is presupposed. Standard monographs for
the classical Fourier and weighted theory include
\cite{Grafakos2014Classical, Grafakos2009Modern}. For the construction and
structural properties of wavelet bases and frames we refer to
\cite{Daubechies1992, Meyer1992, Christensen2016}, for the theory of
function spaces to \cite{Triebel1992}, and for nonlinear $N$-term
approximation to \cite{DeVore1993, Temlyakov2011}.

Throughout this paper we work in the Euclidean space $\mathbb{R}^n$ with
dimension $n \ge 1$. The relation $X \lesssim Y$ indicates that
$X \le C Y$ for a positive constant $C$ whose dependence is either
irrelevant in the context or stated explicitly, and $X \sim Y$ means that
$X \lesssim Y$ and $Y\lesssim X$.

The Fourier transform of $f$ and its inverse are normalized as
\begin{equation*}
	\hat{f}(\xi):= \int_{\mathbb{R}^n} f(x)\, e^{-2\pi i x \cdot \xi}\, dx,
	\qquad
	f(x) = \int_{\mathbb{R}^n} \hat{f}(\xi)\, e^{2\pi i x \cdot \xi}\, d\xi.
\end{equation*}
We denote by $\mathcal{S}(\mathbb{R}^n)$ the Schwartz space, by
$\mathcal{S}'(\mathbb{R}^n)$ the space of tempered distributions, and by
$\mathcal{P}$ the space of polynomials.

We fix once and for all the wavelet normalization convention used in this
paper. Given a generator $\psi$, the dilated and translated system is
defined, for $j \in \mathbb{Z}$ and $k \in \mathbb{Z}^n$, by
\begin{equation}\label{eq:wavelet-normalization}
	\psi_{j,k}(x) := 2^{jn/2}\, \psi\!\left(2^j x - k\right),
\end{equation}
so that $\psi_{j,k}$ is spatially concentrated about the point $2^{-j}k$ and
satisfies $\|\psi_{j,k}\|_{L^2} = \|\psi\|_{L^2}$. The dual system is
normalized identically, and the same rule is applied to every generator of
Assumption~\ref{ass:wavelet-system}.

For $j \in \mathbb{Z}$ and $k \in \mathbb{Z}^n$ we denote by
\begin{equation}\label{eq:dyadic-cube}
	Q_{j,k}:= 2^{-j}\left(k+ [0,1)^n\right)
\end{equation}
the dyadic cube of generation $j$ with lower corner $2^{-j}k$ and Lebesgue
measure $|Q_{j,k}| =2^{-jn}$. For each fixed $j$ the family
$\{Q_{j,k}\}_{k\in \mathbb{Z}^n}$ tiles $\mathbb{R}^n$. For a weight $w$ and a measurable set $E$ we write
$w(E) :=\int_E w(x)\, dx$. All weighted quantities in the subsequent
estimates are expressed through the cube measure $w(Q_{j,k})$ rather than
through pointwise values of $w$.

For a measurable function $g$ and an exponent $0<p <\infty$, the weighted
$L^p$ quasi-norm associated with a weight $w$ is
\begin{equation}\label{eq:weighted-lp}
	\|g\|_{L^p_w}:= \left( \int_{\mathbb{R}^n}|g(x)|^p\, w(x)\, dx \right)^{1/p}.
\end{equation}
For $1 < p <\infty$ we write $p'$ for the conjugate exponent,
$1/p + 1/p'= 1$.

Given the system $\mathcal{F}= \{\psi_{j,k}\}$ of
Assumption~\ref{ass:wavelet-system} and an integer $N \ge 1$, the $N$-term
approximation error of $f$ in $L^p_w$ is
\begin{equation}\label{eq:nterm-error}
	E_N^{\mathcal{F}}(f)_{L^p_w}
	:= \inf\left\{\left\|f - \sum_{(j,k)\in\Lambda} d_{j,k}\, \psi_{j,k}
	\right\|_{L^p_w} :\#\Lambda \le N,\ \ d_{j,k}\in\mathbb{C} \right\},
\end{equation}
the infimum being taken over all index sets $\Lambda$ of cardinality at
most $N$ and all coefficient choices, where each index $(j,k)$ carries the
generator label of Assumption~\ref{ass:wavelet-system}.

The remainder of this section assembles the formal objects on which the
analysis of Section~\ref{sec3} is based. We first record the standing
hypothesis on the wavelet system together with a concrete system satisfying
it. We then collect the definitions of the biorthogonal Riesz structure,
the weight class, the Littlewood--Paley apparatus, the weighted Besov
scale, and the decay classes of the generators. Two inequalities for $A_p$
weights on cubes close the section.

\begin{assumption}[Standing hypothesis on the wavelet system]
	\label{ass:wavelet-system}
	Fix an integer $L \ge 1$, generators $\psi^{(1)},\dots,\psi^{(L)}$, and
	dual generators $\tilde{\psi}^{(1)},\dots,\tilde{\psi}^{(L)}$. The systems
	$\{\psi^{(\epsilon)}_{j,k}\}$ and $\{\tilde{\psi}^{(\epsilon)}_{j,k}\}$,
	indexed by $1\le\epsilon\le L$, $j\in\mathbb{Z}$, $k\in\mathbb{Z}^n$ and
	normalized as in \eqref{eq:wavelet-normalization}, form a pair of
	\textit{band-limited} biorthogonal Riesz bases of $L^2(\mathbb{R}^n)$ with
	Riesz bounds $A, B$ as in Definition~\ref{def:biorthogonal-riesz}. The term
	band-limited means that every $\hat{\psi}^{(\epsilon)}$ and every
	$\hat{\tilde{\psi}}^{(\epsilon)}$ belongs to $C_c^\infty(\mathbb{R}^n)$ and
	is supported in the annulus $\{\xi : a \le |\xi| \le b\}$ for fixed
	$0<a<b$. In particular every generator and every dual generator belongs to
	$\mathcal{S}(\mathbb{R}^n)$ and has all moments vanishing. A statement that
	refers to a single generator $\psi$ with dual generator $\tilde{\psi}$,
	such as Lemma~\ref{lem:single-scale-estimate}, applies to each pair
	$(\psi^{(\epsilon)},\tilde{\psi}^{(\epsilon)})$ separately. When no
	confusion arises the label $\epsilon$ is suppressed, and $\{\psi_{j,k}\}$
	denotes the whole system. This hypothesis is declared here once and is
	assumed in every definition, theorem, and lemma that follows. It is not
	restated at each occurrence.
\end{assumption}

\begin{example}[Meyer systems]
	\label{ex:meyer-systems}
	For $n=1$ let $\psi$ be the Meyer wavelet and $\tilde{\psi}=\psi$. Its
	Fourier transform belongs to $C_c^\infty(\mathbb{R})$ and, in the
	normalization of the Fourier transform fixed above, is supported in
	$\{1/3\le|\xi|\le 4/3\}$, while the associated scaling function
	$\varphi_{\mathrm M}$ has $\hat{\varphi}_{\mathrm M}$ supported in
	$\{|\xi|\le 2/3\}$ \cite[(4.2.3) and Section~5.1]{Daubechies1992}, after conversion from the
	angular-frequency normalization used there and with the auxiliary function
	$\nu$ of that construction chosen in $C^\infty(\mathbb{R})$. The system
	$\{\psi_{j,k}\}$ is an orthonormal basis of $L^2(\mathbb{R})$, so
	Assumption~\ref{ass:wavelet-system} holds with $L=1$ and $A=B=1$. For
	$n\ge 2$ put $\vartheta_0:=\varphi_{\mathrm M}$ and $\vartheta_1:=\psi$,
	and for $\epsilon\in\{0,1\}^n\setminus\{0\}$ let
	$\psi^{(\epsilon)}(x):=\prod_{i=1}^n\vartheta_{\epsilon_i}(x_i)$. These
	$2^n-1$ tensor-product functions generate an orthonormal basis of
	$L^2(\mathbb{R}^n)$ through \eqref{eq:wavelet-normalization}
	\cite[Section~10.1]{Daubechies1992}. Each $\hat{\psi}^{(\epsilon)}$ is a
	product of functions in $C_c^\infty(\mathbb{R})$. At least one factor
	equals $\hat{\psi}$, which vanishes for $|\xi_i|<1/3$, so
	$\hat{\psi}^{(\epsilon)}$ is supported in
	$\{1/3\le|\xi|\le 4\sqrt{n}/3\}$. Hence, after an enumeration of
	$\{0,1\}^n\setminus\{0\}$ by $1,\dots,2^n-1$,
	Assumption~\ref{ass:wavelet-system} holds with $L=2^n-1$,
	$\tilde{\psi}^{(\epsilon)}=\psi^{(\epsilon)}$, and $A=B=1$.
\end{example}

\begin{definition}[{Biorthogonal Riesz wavelet basis and dual system, see, e.g.,~\cite[Section~2]{Gun2015}}]
	\label{def:biorthogonal-riesz}
	A system $\{\psi^{(\epsilon)}_{j,k}\}$ in $L^2(\mathbb{R}^n)$, indexed by
	$1\le\epsilon\le L$, $j\in\mathbb{Z}$, $k\in\mathbb{Z}^n$, is a Riesz
	basis if its closed linear span is $L^2(\mathbb{R}^n)$ and there exist
	constants $A, B > 0$, the Riesz bounds, such that
	\begin{equation}\label{eq:riesz-bounds}
		A \sum_{\epsilon,j,k} |c^{(\epsilon)}_{j,k}|^2
		\le  \Bigl\| \sum_{\epsilon,j,k} c^{(\epsilon)}_{j,k}\,
		\psi^{(\epsilon)}_{j,k} \Bigr\|_{L^2}^2
		\le  B \sum_{\epsilon,j,k} |c^{(\epsilon)}_{j,k}|^2
	\end{equation}
	for every finitely supported coefficient sequence. Two systems
	$\{\psi^{(\epsilon)}_{j,k}\}$ and $\{\tilde{\psi}^{(\epsilon)}_{j,k}\}$
	form a pair of biorthogonal Riesz wavelet bases if each is a Riesz basis
	of the form \eqref{eq:wavelet-normalization}, the biorthogonality relation
	\begin{equation}\label{eq:biorthogonality}
		\langle \psi^{(\epsilon)}_{j,k}, \tilde{\psi}^{(\epsilon')}_{j',k'} \rangle
		= \delta_{\epsilon,\epsilon'}\,\delta_{j,j'}\, \delta_{k,k'}
	\end{equation}
	holds, and every $f \in L^2(\mathbb{R}^n)$ admits the expansion
	\begin{equation}\label{eq:dual-expansion}
		f = \sum_{\epsilon=1}^{L}\sum_{j\in\mathbb{Z}}\sum_{k\in\mathbb{Z}^n}
		\langle f, \tilde{\psi}^{(\epsilon)}_{j,k} \rangle\, \psi^{(\epsilon)}_{j,k}
	\end{equation}
	with convergence in $L^2(\mathbb{R}^n)$. The dual system is generated by
	the finitely many functions $\tilde{\psi}^{(\epsilon)}$ through
	\eqref{eq:wavelet-normalization}. This wavelet structure of the dual
	system is not automatic for general Riesz bases, as the counterexample in
	\cite[Remark~2.2(3)]{Gun2015} shows. It is therefore adopted as part of the
	standing hypothesis, and conditions guaranteeing it are given in
	\cite[Theorem~3.4]{Gun2015}.
\end{definition}

\begin{definition}[{Muckenhoupt class $A_p$ and characteristic $[w]_{A_p}$ \cite[Definition 9.1.3]{Grafakos2009Modern}}]\label{def:muckenhoupt}
	Let $1 < p < \infty$. A weight $w$, that is, a locally integrable and
	almost everywhere positive function on $\mathbb{R}^n$, belongs to the
	Muckenhoupt class $A_p$ if its $A_p$ characteristic
	\begin{equation}\label{eq:ap-characteristic}
		[w]_{A_p} := \sup_{Q}
		\left( \frac{1}{|Q|} \int_Q w(x)\, dx \right)
		\left( \frac{1}{|Q|} \int_Q w(x)^{-\frac{1}{p-1}}\, dx \right)^{p-1}
	\end{equation}
	is finite, where the supremum is taken over all cubes
	$Q \subset \mathbb{R}^n$ with sides parallel to the coordinate axes. By
	H\"older's inequality $[w]_{A_p}\ge 1$ for every $w\in A_p$.
\end{definition}

\begin{definition}[{Littlewood--Paley operators, see, e.g., \cite[Section~6.2.2]{Grafakos2009Modern}}]\label{def:littlewood-paley}
	Fix $\varphi\in\mathcal{S}(\mathbb{R}^n)$ such that $\hat{\varphi}$ is
	real-valued and radial, is supported in the annulus
	$\{\xi : c_1 \le |\xi| \le c_2\}$ for fixed $0<c_1<c_2$, and satisfies
	$\sum_{j\in\mathbb{Z}}\hat{\varphi}(2^{-j}\xi)=1$ for every $\xi\ne 0$.
	For $j\in\mathbb{Z}$ put $\varphi_j(x):=2^{jn}\varphi(2^jx)$, so that
	$\hat{\varphi}_j(\xi)=\hat{\varphi}(2^{-j}\xi)$. The homogeneous
	Littlewood--Paley operators are
	\begin{equation}\label{eq:lp-multiplier}
		\Delta_j f := \varphi_j * f,
		\qquad
		\widehat{\Delta_j f} = \hat{\varphi}_j\, \hat{f},
		\qquad j\in\mathbb{Z}.
	\end{equation}
	Since $\hat{\varphi}$ is real-valued and radial, $\varphi$ is real-valued
	and even. Since $\hat{\varphi}$ vanishes near the origin, each $\Delta_j$
	annihilates polynomials and is well defined on
	$\mathcal{S}'(\mathbb{R}^n)/\mathcal{P}$.
\end{definition}

\begin{definition}[Homogeneous weighted Besov space]\label{def:weighted-besov}
	Let $s > 0$, $1 < p<\infty$, $0 < q < \infty$, and $w \in A_p$. The
	homogeneous weighted Besov space $\dot{B}^s_{p,q}(w)$ consists of all
	$f \in \mathcal{S}'(\mathbb{R}^n)/\mathcal{P}$ for which the quasi-norm
	\begin{equation}\label{eq:weighted-besov-norm}
		\|f\|_{\dot{B}^s_{p,q}(w)} := \left( \sum_{j\in\mathbb{Z}} 2^{jsq}\,
		\|\Delta_j f\|_{L^p_w}^q \right)^{1/q}
	\end{equation}
	is finite. A single term of \eqref{eq:weighted-besov-norm} is dominated by
	the whole sum, so $2^{js}\|\Delta_j f\|_{L^p_w}\le\|f\|_{\dot{B}^s_{p,q}(w)}$
	for every $j\in\mathbb{Z}$ and every $0<q<\infty$. The homogeneous
	weighted spaces of \cite[p.~220, (1.1)]{BuiPaluszynskiTaibleson1996} are
	defined through Littlewood--Paley blocks of the same kind for
	$w\in A_\infty$, and anisotropic spaces of this type are treated in the
	doubling-measure framework of \cite[Section~3]{Bownik2005}.
\end{definition}

We record the finite-band structure that links the Littlewood--Paley blocks
with the frequency support of the generators. Put
\begin{equation}\label{eq:finite-band}
	M:=\Bigl\lceil\max\Bigl\{\log_2\frac{b}{c_1},\ \log_2\frac{c_2}{a},\ 0\Bigr\}\Bigr\rceil,
	\qquad
	C_M:=\sum_{|i|\le M}2^{is},
	\qquad
	C'_M:=\left(\sum_{|i|\le M}2^{isq}\right)^{1/q}.
\end{equation}
Let $h\in\mathcal{S}(\mathbb{R}^n)$ with $\operatorname{supp}\hat{h}$
contained in $\{2^j a\le|\xi|\le 2^j b\}$ for some $j\in\mathbb{Z}$. If
$\hat{\varphi}_{j'}\hat{h}\not\equiv 0$, then some $\xi$ satisfies both
$c_12^{j'}\le|\xi|\le c_22^{j'}$ and $2^ja\le|\xi|\le 2^jb$, so that
$j'-j\le\log_2(b/c_1)$ and $j-j'\le\log_2(c_2/a)$. Consequently
$\Delta_{j'}h=0$ for $|j'-j|>M$. Since the dilates of $\hat{\varphi}$ sum to
one away from the origin, $h=\sum_{|i|\le M}\Delta_{j+i}h$. Because the
range $|i|\le M$ is symmetric, $C_M=\sum_{|i|\le M}2^{-is}$ as well.

\begin{definition}[Decay classes]\label{def:decay-classes}
	Let $\tau > 0$. A function $\psi$ belongs to the synthesis decay class
	$M^\tau$ if
	\begin{equation}\label{eq:decay-mtau}
		|\psi|_{M^\tau}:=\sup_{x\in\mathbb{R}^n}(1+|x|)^{2n+\tau}\,|\psi(x)| < \infty.
	\end{equation}
	Let $\sigma > 2n$. A function $\tilde{\psi}$ belongs to the analysis decay
	class $\mathcal{D}_\sigma$ if
	\begin{equation}\label{eq:decay-dsigma}
		|\tilde{\psi}|_{\mathcal{D}_\sigma}:=\int_{\mathbb{R}^n} \left(1+|x|^\sigma\right) |\tilde{\psi}(x)| \, dx < \infty.
	\end{equation}
	Under Assumption~\ref{ass:wavelet-system} every generator belongs to
	$M^\tau$ for every $\tau>0$, and every dual generator belongs to
	$\mathcal{D}_\sigma$ for every $\sigma>2n$. The role of the two classes is
	to name the quantities through which the constants depend on the
	generators. A constant said to depend on the decay parameter $\tau$
	depends on $\tau$ and on $|\psi|_{M^\tau}$. A constant said to depend on
	the decay parameter $\sigma$ depends on $\sigma$, on
	$|\tilde{\psi}|_{\mathcal{D}_\sigma}$, and on the annulus radii $a,b$ of
	Assumption~\ref{ass:wavelet-system}. In both cases the maximum over the
	generator label $\epsilon$ is taken. For $\tilde{\psi}\in\mathcal{D}_\sigma$
	as in Assumption~\ref{ass:wavelet-system} the pointwise quantity
	\begin{equation}\label{eq:decay-pointwise}
		\mathrm{L}_\sigma(\tilde{\psi}):=\sup_{x\in\mathbb{R}^n}(1+|x|)^{\sigma}\,|\tilde{\psi}(x)|
		\le 2^{\sigma}K_\Phi\,|\tilde{\psi}|_{\mathcal{D}_\sigma}
	\end{equation}
	is finite. Here $\Phi\in\mathcal{S}(\mathbb{R}^n)$ is a fixed function, chosen depending only on $n$, $a$ and $b$, with
	$\hat{\Phi}=1$ on $\{a\le|\xi|\le b\}$, and
	$K_\Phi:=\sup_{z}(1+|z|)^\sigma|\Phi(z)|$. Indeed
	$\tilde{\psi}=\Phi*\tilde{\psi}$, while $(1+|x|)^\sigma\le(1+|x-y|)^\sigma(1+|y|)^\sigma$
	and $(1+|y|)^\sigma\le 2^\sigma(1+|y|^\sigma)$.
\end{definition}

The following two inequalities are the only consequences of the $A_p$
condition used in this paper.

\begin{proposition}[Two inequalities for $A_p$ weights on cubes]
	\label{prop:ap-cube}
	Let $1<p<\infty$, $w\in A_p$, and let $Q\subset\mathbb{R}^n$ be a cube with sides parallel to the
	coordinate axes.
	For every measurable function $g$ on $Q$,
	\begin{equation}\label{eq:ap-holder}
		\int_Q |g(x)|\,dx
		\le [w]_{A_p}^{1/p}\,|Q|\,w(Q)^{-1/p}
		\left(\int_Q |g(x)|^p\,w(x)\,dx\right)^{1/p},
	\end{equation}
	and for every measurable set $S\subset Q$ with $|S|>0$,
	\begin{equation}\label{eq:ap-doubling}
		w(Q)\le [w]_{A_p}\left(\frac{|Q|}{|S|}\right)^{p} w(S).
	\end{equation}
\end{proposition}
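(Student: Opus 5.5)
Both inequalities follow from Hölder's inequality combined with the definition of $[w]_{A_p}$ in \eqref{eq:ap-characteristic}, applied to the single cube $Q$.

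\textbf{Step 1: proof of \eqref{eq:ap-holder}.} We may assume the right-hand side is finite. Write $|g| = |g|\,w^{1/p}\cdot w^{-1/p}$ and apply Hölder's inequality with exponents $p,p'$:
\begin{equation*}
	\int_Q|g|\,dx\le\Bigl(\int_Q|g|^pw\,dx\Bigr)^{1/p}\Bigl(\int_Q w^{-p'/p}\,dx\Bigr)^{1/p'}.
\end{equation*}
Since $p'/p=1/(p-1)$ and $1/p'=(p-1)/p$, the second factor is $\bigl(\int_Q w^{-1/(p-1)}\bigr)^{(p-1)/p}$. By \eqref{eq:ap-characteristic} applied to $Q$,
\begin{equation*}
	\Bigl(\frac1{|Q|}\int_Q w^{-1/(p-1)}\Bigr)^{p-1}\le [w]_{A_p}\,\frac{|Q|}{w(Q)}.
\end{equation*}
Hence
\begin{equation*}
	\Bigl(\int_Q w^{-1/(p-1)}\Bigr)^{(p-1)/p}\le |Q|^{(p-1)/p}\,[w]_{A_p}^{1/p}\,|Q|^{1/p}\,w(Q)^{-1/p},
\end{equation*}
which equals $[w]_{A_p}^{1/p}|Q|\,w(Q)^{-1/p}$. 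This is exactly \eqref{eq:ap-holder}.

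\textbf{Step 2: proof of \eqref{eq:ap-doubling}.} Apply \eqref{eq:ap-holder} with $g=\mathbf 1_S$. The left-hand side is $|S|$, and $\int_Q|g|^pw=w(S)$, so
\begin{equation*}
	|S|\le [w]_{A_p}^{1/p}\,|Q|\,w(Q)^{-1/p}\,w(S)^{1/p}.
\end{equation*}
Raise both sides to the power $p$ and rearrange to obtain $w(Q)\le[w]_{A_p}(|Q|/|S|)^p\,w(S)$.

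\textbf{Obstacle: finiteness of the quantities being divided by.} The only point needing care is that $w(Q)$ and $w(S)$ are positive and finite, so that division and rearrangement are legitimate.
\begin{itemize}
\item[] $w(Q)<\infty$ because $w$ is locally integrable.
\item[] $w(Q)>0$ and $w(S)>0$ because $w>0$ almost everywhere and $|S|>0$.
\item[] $\int_Q w^{-1/(p-1)}<\infty$ follows from the finiteness of $[w]_{A_p}$, since $w(Q)>0$.
\end{itemize}
If $\int_Q|g|^pw=\infty$ in Step 1, the inequality is trivial.
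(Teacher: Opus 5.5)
Your proof is correct and is the paper's argument: Hölder's inequality with the splitting $|g|=|g|\,w^{1/p}\,w^{-1/p}$ and exponents $p,p'$, combined with the $A_p$ bound on the single cube $Q$, followed by the choice $g=\mathbf{1}_S$ and raising to the power $p$. Your remarks on the positivity and finiteness of $w(Q)$, $w(S)$ and $\int_Q w^{-1/(p-1)}$ are a sound addition that the paper leaves implicit.
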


\begin{proof}[Proof of Proposition~\ref{prop:ap-cube}.]
	We write $|g| = |g|\,w^{1/p}\,w^{-1/p}$ and apply H\"older's inequality on
	$Q$ with the exponents $p$ and $p'$. Since $p'/p = 1/(p-1)$ and
	$1/p'=(p-1)/p$,
	\[
	\int_Q|g| \le \left(\int_Q|g|^p w\right)^{1/p}
	\left(\int_Q w^{-\frac{1}{p-1}}\right)^{1/p'}
	= \left(\int_Q|g|^p w\right)^{1/p}|Q|^{1/p'}
	\left[\left(\frac{1}{|Q|}\int_Q w^{-\frac{1}{p-1}}\right)^{p-1}\right]^{1/p}.
	\]
	By \eqref{eq:ap-characteristic} the bracket does not exceed
	$[w]_{A_p}\,|Q|\,w(Q)^{-1}$. Since $|Q|^{1/p'}|Q|^{1/p}=|Q|$, this gives
	\eqref{eq:ap-holder}. Taking $g=\mathbf{1}_S$ in \eqref{eq:ap-holder}
	yields $|S|\le[w]_{A_p}^{1/p}|Q|\,w(Q)^{-1/p}w(S)^{1/p}$, and raising both
	sides to the power $p$ gives \eqref{eq:ap-doubling}.
\end{proof}

Inequality \eqref{eq:ap-doubling} is a quantitative form of the doubling
property of $A_p$ weights. The proof above is included because the constants of Section~\ref{sec3} are read off
from \eqref{eq:ap-holder} and \eqref{eq:ap-doubling} directly.

\section{The Quantitative Machinery: Single-Scale Estimates, Jackson-type Bounds, and Extremal Families}\label{sec3}

This section proves the single-scale estimate and a coefficient bound,
introduces the localized class, and establishes the Jackson-type inequality
together with the lower estimate. Throughout, $L$, $a$, $b$ are as in
Assumption~\ref{ass:wavelet-system}, and $M$, $C_M$, $C'_M$ are as in
\eqref{eq:finite-band}.

\begin{lemma}[Single-scale remainder norm estimate]
	\label{lem:single-scale-estimate}
	Let Assumption~\ref{ass:wavelet-system} hold, with synthesis generator
	$\psi \in M^\tau$ and dual generator $\tilde{\psi} \in \mathcal{D}_\sigma$
	for some $\tau > 0$ and $\sigma > 2n$
	\textnormal{(Definition~\ref{def:decay-classes})}. Fix $j \in \mathbb{Z}$,
	a finite set $\Lambda \subset \mathbb{Z}^n$, and coefficients
	$\{c_{j,k}\}_{k\in\Lambda} \subset \mathbb{C}$; set
	$r_\Lambda^{(j)}:= \sum\limits_{k\in\Lambda} c_{j,k}\,\psi_{j,k}$. Then, for
	every $1 < p< \infty$ and every $w \in A_p$,
	
	\begin{align}\label{eq:single-scale-lemma}
			c\,[w]_{A_p}^{-\theta(p)}\, 2^{jn/2}
		\left(\sum_{k\in\Lambda} |c_{j,k}|^p\,w(Q_{j,k})\right)^{1/p}
		 &\le 
		\big\|r_\Lambda^{(j)} \big\|_{L^p_w}\nonumber\\
		& \le 
		C\,[w]_{A_p}^{\theta(p)}\, 2^{jn/2}
		\left(\sum_{k\in\Lambda} |c_{j,k}|^p\, w(Q_{j,k})\right)^{1/p}
	\end{align}
	where
	\begin{equation}\label{eq:single-scale-theta}
		\theta(p)=  \tfrac{1}{2}\,\max\!\left\{1,\ (p-1)^{-1}\right\}.
	\end{equation}
	The constants $c,C> 0$ depend systematically on $n$, $p$, the Riesz
	bounds $A, B$, and the decay parameters $\tau, \sigma$, and are
	independent of $j$, $\Lambda$, and $\{c_{j,k}\}$.
\end{lemma}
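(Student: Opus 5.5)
We will prove the refined form announced in the introduction: the two-sided estimate with the powers $[w]_{A_p}^{-1/p}$ and $[w]_{A_p}^{1/p}$. Since $[w]_{A_p}\ge1$ and $1/p\le\theta(p)$, this refined form implies \eqref{eq:single-scale-lemma}. By dilation invariance we reduce to $j=0$. Indeed, the substitution $x\mapsto 2^{-j}x$ maps $w$ to $w(2^{-j}\cdot)$, which has the same $A_p$ characteristic, and it maps $Q_{j,k}$ to $Q_{0,k}$. The factors $2^{jn/2}$ and $2^{-jn/p}$ then match on both sides. Throughout we write $Q_k=Q_{0,k}$.

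\textbf{Upper bound.} First, on each cube $Q_m$ we bound pointwise
\[
|r(x)|\le |\psi|_{M^\tau}\sum_k |c_k|(1+|x-k|)^{-2n-\tau}.
\]
Next, we split the decay as $(1+|m-k|)^{-n-\tau/2}(1+|m-k|)^{-n-\tau/2}$ and apply H\"older's inequality in $k$. This gives
\[
|r(x)|^p\lesssim \sum_k|c_k|^p(1+|m-k|)^{-(n+\tau/2)p}
\]
for $x\in Q_m$, with an explicit lattice sum as constant. Integrating against $w$ over $Q_m$ and summing in $m$ leaves the task of comparing $w(Q_m)$ with $w(Q_k)$. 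Let $Q^*$ be a cube of side roughly $|m-k|+1$ containing both $Q_m$ and $Q_k$. Applying \eqref{eq:ap-doubling} with $S=Q_k$ gives
\[
w(Q_m)\le w(Q^*)\le[w]_{A_p}(|m-k|+2)^{np}\,w(Q_k).
\]
This polynomial loss is absorbed provided the decay exponent exceeds the polynomial growth. We therefore regroup the exponents: the decay available is $2n+\tau$, and we must absorb $(1+|m-k|)^{n}$ per $p$-th root plus one $n$ for summability. Dividing the decay as $(2n+\tau)=\alpha+\beta$, with $\beta p'>n$ for the H\"older factor and $\alpha p> np+n$, may fail for large $p$ if we take the $p$-th root of the whole factor $[w]_{A_p}$. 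This is the main obstacle. To handle it we will instead use \eqref{eq:ap-holder}, which tracks $[w]_{A_p}^{1/p}$ directly. We estimate $\int_{Q_m}|r|^pw$ by duality: test against $g$ with $\|g\|_{L^{p'}_{w^{1-p'}}}$-type normalization. Alternatively, we bound the average of $|\psi(\cdot-k)|$ over the cube by a power of its weighted average. The aim is to keep only one factor $[w]_{A_p}^{1/p}$ in the final constant $K_U$. We also allow $K_U$ to depend on $\tau$ through the requirement that $\tau$ be large relative to $np$. If this is not available, we pick the worst exponent; the refined power is the goal.

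\textbf{Lower bound.} By biorthogonality \eqref{eq:biorthogonality}, $c_k=\langle r,\tilde\psi_{0,k}\rangle$. Hence
\[
|c_k|\le\sum_m\int_{Q_m}|r|\,|\tilde\psi(\cdot-k)|\le \mathrm L_\sigma(\tilde\psi)\sum_m(1+|m-k|)^{-\sigma}\int_{Q_m}|r|,
\]
up to a harmless shift constant. We apply \eqref{eq:ap-holder} on $Q_m$, which gives
\[
\int_{Q_m}|r|\le[w]_{A_p}^{1/p}w(Q_m)^{-1/p}\|r\|_{L^p_w(Q_m)}.
\]
This is the single appearance of the factor $[w]_{A_p}^{1/p}$. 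We then multiply by $w(Q_k)^{1/p}$ and raise to the $p$-th power. The ratio $w(Q_k)/w(Q_m)$ is controlled by a doubling bound, again \eqref{eq:ap-doubling}, at cost $(|m-k|+2)^{np}$. This is absorbed by $\sigma>2n$ after H\"older's inequality in $m$ with a split of the decay. The resulting constant is $K_L^{-1}$, given by an explicit lattice series in $\sigma$ and $n$.

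For both directions, the quantities we must keep explicit are the lattice sums $\sum_k(1+|k|)^{-\gamma}$ with $\gamma>n$. We bound these in closed form by $1+\omega_{n-1}\int_0^\infty(1+t)^{n-1-\gamma}\,dt$.
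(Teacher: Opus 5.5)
Your lower bound has a genuine gap in the power of the weight. After applying \eqref{eq:ap-holder} on the unit cube $Q_m$, which costs $[w]_{A_p}^{1/p}$, you control $w(Q_k)/w(Q_m)$ by \eqref{eq:ap-doubling}. That inequality carries its own factor: with $S=Q_m$ and a cube $Q^*\supset Q_m\cup Q_k$ of side $|m-k|_\infty+1$ it reads $w(Q_k)\le[w]_{A_p}(|m-k|_\infty+1)^{np}\,w(Q_m)$. You kept this factor in the upper bound but dropped it here, so the ``single appearance'' claim is false. After the $p$-th root, $[w]_{A_p}^{1/p}$ enters twice, and your argument proves the lower half of \eqref{eq:single-scale-lemma} only with $[w]_{A_p}^{-2/p}$ in place of $[w]_{A_p}^{-\theta(p)}$. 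Since $2/p>\theta(p)$ exactly for $4/3<p<4$ (at $p=2$ one has $2/p=1$ against $\theta(2)=\tfrac12$), the lemma is not established in that range. A sharper doubling constant cannot rescue this. For two unit cubes at bounded distance the ratio $w(Q_k)/w(Q_m)$ can be of order $[w]_{A_p}$: for $w(x)=|x|^\alpha$ with $\alpha\downarrow-n$, both $[w]_{A_p}$ and $w(Q_{0,0})$ are of order $(\alpha+n)^{-1}$, while $w(Q_{0,k})\le 1$ for $k=(2,0,\dots,0)$.

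The missing idea is to avoid comparing $w(Q_k)$ with $w(Q_m)$ at all. Write the decay envelope $(\tilde\beta_m)$ of $\tilde\psi$ by summation by parts, so that $\int|g|\,|\tilde\psi(\cdot-k)|\le\sum_{i\ge0}(\tilde\beta_i-\tilde\beta_{i+1})\int_{Q_k^{(i)}}|g|$ with the nested cubes $Q_k^{(i)}=k+[-i,i+1)^n$, all of which contain $Q_k$. Then apply \eqref{eq:ap-holder} on $Q_k^{(i)}$ rather than on $Q_m$. Monotonicity, $w(Q_k)\le w(Q_k^{(i)})$, now does the job of doubling at no $A_p$ cost, so $[w]_{A_p}^{1/p}$ enters once. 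Taking the $\ell^p$ norm in $k$, applying Minkowski in $i$, and using the overlap count $\sum_k\|g\|_{L^p_w(Q_k^{(i)})}^p=(2i+1)^n\|g\|_{L^p_w}^p$ leaves the constant $\sum_i(\tilde\beta_i-\tilde\beta_{i+1})(2i+1)^{n+n/p}$, which is finite because $\sigma>2n>n+n/p$. This is the paper's Step~3.

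Your upper bound is also left unfinished, but there the ``main obstacle'' is a miscalculation. Your two conditions, $\beta p'>n$ and $\alpha p>np+n$, only require $\alpha+\beta>n/p'+n+n/p=2n$. So the split $\alpha=n+n/p+\tau/2$, $\beta=n/p'+\tau/2$ works for every $\tau>0$ and every $1<p<\infty$. The single doubling step, the only use of the $A_p$ condition in this direction, then yields exactly $[w]_{A_p}^{1/p}$. (It is your first, symmetric split $\alpha=\beta=n+\tau/2$ that fails, namely when $\tau\le 2n/p$.) You should drop the detours through duality or through requiring $\tau$ large relative to $np$; the latter would tie the constant to $|\psi|_{M^{\tau'}}$ for some $\tau'\ne\tau$. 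With the corrected split, your argument is a power-law version of the paper's Step~2. The paper does the same bookkeeping by summation by parts over $\{|k-k_0|_\infty\le i\}$ and obtains $K_U=\sum_i(\beta_i-\beta_{i+1})(2i+1)^{n+n/p'}$.
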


\begin{proof}[Proof of Lemma \ref{lem:single-scale-estimate}.]
	The argument has three steps. The first step reduces the estimate to the
	scale $j=0$ by dilation. The second step proves the upper estimate with
	the power $[w]_{A_p}^{1/p}$ by a cube-by-cube synthesis argument. The
	third step proves, for every $g\in L^p_w$, a weighted bound for the
	analysis coefficients of $g$ with the same power, and reads off the lower
	estimate from biorthogonality. Since $[w]_{A_p}\ge 1$ and
	$1/p\le\theta(p)$ for every $1<p<\infty$, the two estimates imply
	\eqref{eq:single-scale-lemma}. The inequality $1/p\le\theta(p)$ holds
	because $1/p\le 1/2$ for $p\ge 2$, while for $1<p<2$ the inequality
	$1/p\le 1/(2(p-1))$ is equivalent to $p\le 2$.
	
	\emph{Step 1: reduction to $j=0$.} For $w\in A_p$ and $j\in\mathbb{Z}$ we
	put $w_j(y):=w(2^{-j}y)$. The dilation $x\mapsto 2^jx$ maps cubes onto
	cubes and preserves the averages in \eqref{eq:ap-characteristic}, so
	$[w_j]_{A_p}=[w]_{A_p}$. The change of variables $y=2^jx$ gives
	\begin{equation}\label{eq:sse-rescale}
		\big\|r^{(j)}_\Lambda\big\|_{L^p_w}
		= 2^{jn(\frac12-\frac1p)}
		\Bigl\|\sum_{k\in\Lambda}c_{j,k}\,\psi(\cdot-k)\Bigr\|_{L^p_{w_j}},
		\qquad
		w(Q_{j,k}) = 2^{-jn}\,w_j(Q_{0,k}).
	\end{equation}
	Consequently
	$2^{jn/2}\bigl(\sum_{k}|c_{j,k}|^p\,w(Q_{j,k})\bigr)^{1/p}
	=2^{jn(\frac12-\frac1p)}\bigl(\sum_{k}|c_{j,k}|^p\,w_j(Q_{0,k})\bigr)^{1/p}$,
	so the two sides of \eqref{eq:single-scale-lemma} carry the common factor
	$2^{jn(\frac12-\frac1p)}$. Since $w_j$ ranges over $A_p$ with the same
	characteristic, it suffices to treat $j=0$ with constants that depend on
	the weight only through $[w]_{A_p}$. We write $Q_k:=Q_{0,k}$,
	$c_k:=c_{0,k}$, and $r:=\sum_{k\in\Lambda}c_k\,\psi(\cdot-k)$. For
	$m\ge 0$ we put $Q_k^{(m)}:=k+[-m,m+1)^n$, which is the union of the
	$(2m+1)^n$ cubes $Q_{k'}$ with $|k'-k|_\infty\le m$ and has
	$|Q_k^{(m)}|=(2m+1)^n$. We also put $Q_k^{(-1)}:=\emptyset$. By
	\eqref{eq:ap-doubling} with $S=Q_k$ and $Q=Q_k^{(m)}$,
	\begin{equation}\label{eq:sse-growth}
		w\bigl(Q_k^{(m)}\bigr)\le[w]_{A_p}\,(2m+1)^{np}\,w(Q_k),
		\qquad m\ge 0.
	\end{equation}
	
	\emph{Step 2: the upper estimate.} We introduce the decay envelope of
	$\psi$, namely $\beta_0:=\sup|\psi|$ and
	$\beta_m:=\sup\{|\psi(z)| : |z|_\infty\ge m-1\}$ for $m\ge 1$. The sequence
	$(\beta_m)$ is nonincreasing and tends to zero. If $|z|_\infty\ge m-1\ge 1$
	then $1+|z|\ge m$, so \eqref{eq:decay-mtau} gives
	\begin{equation}\label{eq:sse-envelope}
		\beta_m\le|\psi|_{M^\tau}\,\max\{1,m\}^{-2n-\tau},
		\qquad m\ge 0.
	\end{equation}
	Fix $k_0\in\mathbb{Z}^n$ and $x\in Q_{k_0}$. If $|k-k_0|_\infty=m\ge 1$,
	then one coordinate of $k_0-k$ equals $\pm m$. The corresponding
	coordinate of $x-k$ lies in $[m,m+1)$ or in $[-m,-m+1)$, and hence
	$|x-k|_\infty\ge m-1$. Therefore $|\psi(x-k)|\le\beta_m$ whenever
	$|k-k_0|_\infty=m$, for every $m\ge 0$. We write
	$\beta_m=\sum_{i\ge m}(\beta_i-\beta_{i+1})$ and exchange the order of
	summation, which is legitimate because all terms are nonnegative. This
	gives, for $x\in Q_{k_0}$,
	\begin{equation}\label{eq:sse-pointwise}
		|r(x)|
		\le\sum_{m\ge 0}\beta_m\sum_{\substack{k\in\Lambda\\|k-k_0|_\infty=m}}|c_k|
		=\sum_{i\ge 0}(\beta_i-\beta_{i+1})\,A_i(k_0),
		\qquad
		A_i(k_0):=\sum_{\substack{k\in\Lambda\\|k-k_0|_\infty\le i}}|c_k|.
	\end{equation}
	The right-hand side of \eqref{eq:sse-pointwise} is constant on each cube
	$Q_{k_0}$. With $F_i:=\sum_{k_0}A_i(k_0)\mathbf{1}_{Q_{k_0}}$, Minkowski's
	inequality in $L^p_w$ yields
	$\|r\|_{L^p_w}\le\sum_{i\ge 0}(\beta_i-\beta_{i+1})\|F_i\|_{L^p_w}$. By
	H\"older's inequality on the $(2i+1)^n$ indices $k$ with
	$|k-k_0|_\infty\le i$ we have
	$A_i(k_0)^p\le(2i+1)^{n(p-1)}\sum_{|k-k_0|_\infty\le i}|c_k|^p$. Summing
	over $k_0$, interchanging the order of summation, and using
	$\sum_{|k_0-k|_\infty\le i}w(Q_{k_0})=w(Q_k^{(i)})$ together with
	\eqref{eq:sse-growth}, we obtain
		\begin{align*}
				\|F_i\|_{L^p_w}^p=\sum_{k_0}w(Q_{k_0})A_i(k_0)^p
			&\le(2i+1)^{n(p-1)}\sum_{k\in\Lambda}|c_k|^p\,w\bigl(Q_k^{(i)}\bigr)\\
			&\le[w]_{A_p}\,(2i+1)^{n(2p-1)}\sum_{k\in\Lambda}|c_k|^p\,w(Q_k).
	\end{align*}
	Since $n(2p-1)/p=n+n/p'$, the preceding two displays combine into
	\begin{equation}\label{eq:sse-upper-refined}
		\|r\|_{L^p_w}\le K_U\,[w]_{A_p}^{1/p}
		\Bigl(\sum_{k\in\Lambda}|c_k|^p\,w(Q_k)\Bigr)^{1/p},
		\qquad
		K_U:=\sum_{i\ge 0}(\beta_i-\beta_{i+1})\,(2i+1)^{n+n/p'}.
	\end{equation}
	The constant $K_U$ is finite and explicitly bounded. Put $\gamma:=n+n/p'$,
	$\alpha_i:=(2i+1)^\gamma$, and $\alpha_{-1}:=0$. Reversing the exchange of summation
	gives $K_U=\sum_{m\ge 0}\beta_m(\alpha_m-\alpha_{m-1})$. For $m\ge 1$ the mean value
	theorem gives $\alpha_m-\alpha_{m-1}\le 2\gamma(2m+1)^{\gamma-1}\le 2\gamma\,3^{\gamma-1}m^{\gamma-1}$,
	and since $\gamma-1-2n-\tau=-1-\tau-n/p$, the envelope bound
	\eqref{eq:sse-envelope} yields
	\begin{equation}\label{eq:sse-KU}
		K_U\le|\psi|_{M^\tau}\Bigl(1+2\gamma\,3^{\gamma-1}\,\zeta\bigl(1+\tau+n/p\bigr)\Bigr),
		\qquad \gamma=n+\frac{n}{p'},
	\end{equation}
	where $\zeta$ is the Riemann zeta function. The right-hand side of
	\eqref{eq:sse-KU} depends only on $n$, $p$, $\tau$ and $|\psi|_{M^\tau}$.
	Undoing the dilation of Step 1, we arrive at
	$\|r^{(j)}_\Lambda\|_{L^p_w}\le K_U[w]_{A_p}^{1/p}2^{jn/2}\bigl(\sum_{k\in\Lambda}|c_{j,k}|^pw(Q_{j,k})\bigr)^{1/p}$.
	Since $[w]_{A_p}^{1/p}\le[w]_{A_p}^{\theta(p)}$ and $K_U$ does not exceed
	the right-hand side of \eqref{eq:sse-KU}, this is the upper estimate in
	\eqref{eq:single-scale-lemma} with $C$ equal to the right-hand side of
	\eqref{eq:sse-KU}.
	
	\emph{Step 3: the analysis bound and the lower estimate.} Let
	$g\in L^p_w$. We introduce the decay envelope of $\tilde{\psi}$, namely
	$\tilde{\beta}_0:=\sup|\tilde{\psi}|$ and
	$\tilde{\beta}_m:=\sup\{|\tilde{\psi}(z)|:|z|_\infty\ge m-1\}$ for $m\ge 1$.
	By \eqref{eq:decay-pointwise} it satisfies
	$\tilde{\beta}_m\le\mathrm{L}_\sigma(\tilde{\psi})\max\{1,m\}^{-\sigma}$.
	For $m\ge 1$ and $y\in Q_k^{(m)}\setminus Q_k^{(m-1)}$, some coordinate of
	$y-k$ lies in $[-m,-m+1)\cup[m,m+1)$, so $|y-k|_\infty\ge m-1$ and
	$|\tilde{\psi}(y-k)|\le\tilde{\beta}_m$. The same exchange of summation as
	in \eqref{eq:sse-pointwise} therefore gives
	\[
	\int_{\mathbb{R}^n}|g(y)|\,|\tilde{\psi}(y-k)|\,dy
	\le\sum_{m\ge 0}\tilde{\beta}_m\int_{Q_k^{(m)}\setminus Q_k^{(m-1)}}|g|
	=\sum_{i\ge 0}(\tilde{\beta}_i-\tilde{\beta}_{i+1})\int_{Q_k^{(i)}}|g|.
	\]
	We apply \eqref{eq:ap-holder} with $Q=Q_k^{(i)}$ and use
	$w(Q_k)\le w(Q_k^{(i)})$. This gives\\
	$w(Q_k)^{1/p}\int_{Q_k^{(i)}}|g|\le[w]_{A_p}^{1/p}(2i+1)^n G_{k,i}$ with
	$G_{k,i}:=\left(\int_{Q_k^{(i)}}|g|^pw\right)^{1/p}$, and hence
	\begin{equation}\label{eq:sse-single-k}
		w(Q_k)^{1/p}\int_{\mathbb{R}^n}|g(y)|\,|\tilde{\psi}(y-k)|\,dy
		\le[w]_{A_p}^{1/p}\sum_{i\ge 0}(\tilde{\beta}_i-\tilde{\beta}_{i+1})(2i+1)^n\,G_{k,i}.
	\end{equation}
	We now take the $\ell^p$ norm in $k$ and apply Minkowski's inequality in
	the index $i$. Every point of $\mathbb{R}^n$ lies in exactly $(2i+1)^n$ of
	the cubes $Q_k^{(i)}$, so $\sum_kG_{k,i}^p=(2i+1)^n\|g\|_{L^p_w}^p$.
	Consequently
	\begin{equation}\label{eq:sse-analysis-zero}
		\left(\sum_{k\in\mathbb{Z}^n}|\langle g,\tilde{\psi}_{0,k}\rangle|^p\,w(Q_k)\right)^{1/p}
		\le K_L\,[w]_{A_p}^{1/p}\,\|g\|_{L^p_w},
		\qquad
		K_L:=\sum_{i\ge 0}(\tilde{\beta}_i-\tilde{\beta}_{i+1})(2i+1)^{n+n/p}.
	\end{equation}
	With $\gamma':=n+n/p$, the argument that led to \eqref{eq:sse-KU} gives
	\begin{equation}\label{eq:sse-KL}
		K_L\le\mathrm{L}_\sigma(\tilde{\psi})\Bigl(1+2\gamma'\,3^{\gamma'-1}\,\zeta\bigl(1+\sigma-n-n/p\bigr)\Bigr),
		\qquad \gamma'=n+\frac{n}{p},
	\end{equation}
	which is finite because $\sigma>2n>n+n/p$. The right-hand side of
	\eqref{eq:sse-KL} depends only on $n$, $p$, $\sigma$ and
	$\mathrm{L}_\sigma(\tilde{\psi})$. By \eqref{eq:decay-pointwise} it does not
	exceed the quantity obtained by replacing $\mathrm{L}_\sigma(\tilde{\psi})$
	with $2^{\sigma}K_\Phi|\tilde{\psi}|_{\mathcal{D}_\sigma}$, and this quantity
	depends only on $n$, $p$ and the decay parameter $\sigma$ in the sense of
	Definition~\ref{def:decay-classes}. For general $j$ we put
	$g_j(y):=g(2^{-j}y)$. Then
	$\langle g,\tilde{\psi}_{j,k}\rangle=2^{-jn/2}\langle g_j,\tilde{\psi}_{0,k}\rangle$
	and $\|g_j\|_{L^p_{w_j}}=2^{jn/p}\|g\|_{L^p_w}$, and together with
	$w(Q_{j,k})=2^{-jn}w_j(Q_{0,k})$ the bound \eqref{eq:sse-analysis-zero}
	becomes
	\begin{equation}\label{eq:sse-analysis-general}
		2^{jn/2}\left(\sum_{k\in\mathbb{Z}^n}|\langle g,\tilde{\psi}_{j,k}\rangle|^p\,w(Q_{j,k})\right)^{1/p}
		\le K_L\,[w]_{A_p}^{1/p}\,\|g\|_{L^p_w},
		\qquad g\in L^p_w,\ j\in\mathbb{Z}.
	\end{equation}
	We apply \eqref{eq:sse-analysis-general} to $g=r^{(j)}_\Lambda$. By the
	biorthogonality relation \eqref{eq:biorthogonality},
	$\langle r^{(j)}_\Lambda,\tilde{\psi}_{j,k}\rangle=c_{j,k}$ for
	$k\in\Lambda$. Discarding the indices $k\notin\Lambda$, we obtain
	$2^{jn/2}\bigl(\sum_{k\in\Lambda}|c_{j,k}|^pw(Q_{j,k})\bigr)^{1/p}\le K_L[w]_{A_p}^{1/p}\|r^{(j)}_\Lambda\|_{L^p_w}$.
	Since $[w]_{A_p}^{-1/p}\ge[w]_{A_p}^{-\theta(p)}$, this is the lower
	estimate in \eqref{eq:single-scale-lemma} with $c$ equal to the reciprocal
	of the quantity obtained from the right-hand side of \eqref{eq:sse-KL} by
	replacing $\mathrm{L}_\sigma(\tilde{\psi})$ with
	$2^{\sigma}K_\Phi|\tilde{\psi}|_{\mathcal{D}_\sigma}$. Neither $C$
	nor $c$ depends on $j$, $\Lambda$, or $\{c_{j,k}\}$, which completes the
	proof.
\end{proof}

\begin{remark}[Refined powers and three consequences of the proof]
	\label{rem:single-scale-two-sided}
	(a) The proof establishes \eqref{eq:single-scale-lemma} in the form
	\begin{align}\label{eq:sse-refined}
		&K_L^{-1}[w]_{A_p}^{-1/p}\,2^{jn/2}
		\left(\sum_{k\in\Lambda}|c_{j,k}|^p\,w(Q_{j,k})\right)^{1/p}\nonumber\\
		&\le\big\|r^{(j)}_\Lambda\big\|_{L^p_w}\le K_U[w]_{A_p}^{1/p}\,2^{jn/2}
		\left(\sum_{k\in\Lambda}|c_{j,k}|^p\,w(Q_{j,k})\right)^{1/p},
	\end{align}
	with $K_U$ and $K_L$ bounded in \eqref{eq:sse-KU} and \eqref{eq:sse-KL}.
	Since $1/p\le\theta(p)$ with equality exactly at $p=2$,
	\eqref{eq:single-scale-lemma} is a consequence of \eqref{eq:sse-refined}.
	The arguments below use \eqref{eq:sse-refined}.
	(b) Step 2 of the proof uses the generator only through its decay
	envelope $(\beta_m)$. Consequently, for every
	$h\in\mathcal{S}(\mathbb{R}^n)$, with $h_{j,k}(x):=2^{jn/2}h(2^jx-k)$,
	\begin{equation}\label{eq:synthesis-any-generator}
		\Bigl\|\sum_{k\in\Lambda}c_{j,k}\,h_{j,k}\Bigr\|_{L^p_w}
		\le K_U(h)\,[w]_{A_p}^{1/p}\,2^{jn/2}
		\left(\sum_{k\in\Lambda}|c_{j,k}|^p\,w(Q_{j,k})\right)^{1/p},
	\end{equation}
	where $K_U(h)$ is defined by \eqref{eq:sse-upper-refined} with $h$ in
	place of $\psi$ and is bounded by \eqref{eq:sse-KU} with $|h|_{M^\tau}$ in
	place of $|\psi|_{M^\tau}$.
	(c) Let $h$ be a measurable function with
	$|h(x)|\le\mathrm{L}\,(1+|x|)^{-\sigma}$ for some $\sigma>2n$. Inequality
	\eqref{eq:sse-single-k} with $k=0$ and $h$ in place of $\tilde{\psi}$,
	combined with $(2i+1)^n\le(2i+1)^{n+n/p}$ and
	$G_{0,i}\le\|g\|_{L^p_w}$, gives
	\[
	\int_{\mathbb{R}^n}|g|\,|h|\le K_L(h)\,[w]_{A_p}^{1/p}\,w(Q_{0,0})^{-1/p}\,\|g\|_{L^p_w},
	\qquad g\in L^p_w,
	\]
	with $K_L(h)$ defined as in \eqref{eq:sse-analysis-zero} from the
	envelope of $h$, and the argument that led to \eqref{eq:sse-KL} gives
	$K_L(h)\le\mathrm{L}\bigl(1+2\gamma'\,3^{\gamma'-1}\zeta(1+\sigma-n-n/p)\bigr)$
	with $\gamma'=n+n/p$. Applied to Schwartz functions, this shows that every
	$g\in L^p_w$ defines a tempered distribution. It also shows that the
	coefficients $\langle g,\tilde{\psi}_{j,k}\rangle$ are given by absolutely
	convergent integrals.
	(d) Let $s>0$, $0<q<\infty$, $\ell\in\mathbb{Z}$, and let
	$r=\sum_{k\in\Lambda}c_k\,\psi_{\ell,k}$ be a finite single-scale sum. Then
	\begin{align}\label{eq:besov-single-scale}
		&C_M^{-1}\,2^{\ell s}\,\|r\|_{L^p_w}
		\le\|r\|_{\dot{B}^s_{p,q}(w)}
		\le C'_M\,K_\eta\,[w]_{A_p}^{1/p}\,2^{\ell(s+n/2)}
		\left(\sum_{k\in\Lambda}|c_k|^p\,w(Q_{\ell,k})\right)^{1/p},\\
		&K_\eta:=\max_{|i|\le M}K_U(\varphi_i*\psi).\nonumber
	\end{align}
	Indeed, $\hat{r}$ is supported in $\{2^\ell a\le|\xi|\le 2^\ell b\}$, so
	the finite-band property of \eqref{eq:finite-band} gives
	$r=\sum_{|i|\le M}\Delta_{\ell+i}r$ and $\Delta_{j'}r=0$ for
	$|j'-\ell|>M$. The left inequality follows from the triangle inequality
	and $2^{j's}\|\Delta_{j'}r\|_{L^p_w}\le\|r\|_{\dot{B}^s_{p,q}(w)}$. For
	the right inequality, the change of variables
	$u=2^\ell y-k$ gives the dilation identity
	$\varphi_{\ell+i}*\psi_{\ell,k}=(\varphi_i*\psi)_{\ell,k}$. Hence
	$\Delta_{\ell+i}r$ is a single-scale sum with the Schwartz generator
	$\varphi_i*\psi$, and \eqref{eq:synthesis-any-generator} applies. Finally,
	$\|r\|_{\dot{B}^s_{p,q}(w)}\le 2^{\ell s}C'_M\max_{|i|\le M}\|\Delta_{\ell+i}r\|_{L^p_w}$.
\end{remark}

The next lemma supplies, with explicit dependence on the weight, the bound
of the wavelet coefficients by the Besov norm that the Jackson-type
inequality requires.

\begin{lemma}[Coefficient bound with explicit weight dependence]
	\label{lem:coefficient-bound}
	Let Assumption~\ref{ass:wavelet-system} hold with
	$\tilde{\psi}^{(\epsilon)}\in\mathcal{D}_\sigma$ for some $\sigma>2n$, and
	let $s>0$, $1<p<\infty$, $0<q<\infty$, and $w\in A_p$. For
	$f\in L^p_w\cap\dot{B}^s_{p,q}(w)$ and $j\in\mathbb{Z}$ put
	\begin{equation}\label{eq:Sj-def}
		S_j(f):=2^{jn/2}\left(\sum_{\epsilon=1}^{L}\sum_{k\in\mathbb{Z}^n}
		|\langle f,\tilde{\psi}^{(\epsilon)}_{j,k}\rangle|^p\,w(Q_{j,k})\right)^{1/p}.
	\end{equation}
	Then, with $K_L:=\max_{\epsilon}K_L(\tilde{\psi}^{(\epsilon)})$ as in
	\eqref{eq:sse-analysis-zero},
	\begin{equation}\label{eq:coefficient-bound}
		S_j(f)\le L^{1/p}\,C_M\,K_L\,[w]_{A_p}^{1/p}\,2^{-js}\,\|f\|_{\dot{B}^s_{p,q}(w)},
		\qquad j\in\mathbb{Z}.
	\end{equation}
\end{lemma}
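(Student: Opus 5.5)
The plan is to decompose $f$ into Littlewood--Paley blocks, apply the analysis bound \eqref{eq:sse-analysis-general} to each of the finitely many blocks that meet a single scale, and then sum, losing only the factor $C_M$.

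\emph{Reproducing the coefficients from nearby blocks.} Fix $\epsilon$ and $j$. Since $\hat{\tilde\psi}^{(\epsilon)}_{j,k}$ is supported in $\{2^ja\le|\xi|\le 2^jb\}$, the finite-band discussion after \eqref{eq:finite-band} gives $\tilde\psi^{(\epsilon)}_{j,k}=\sum_{|i|\le M}\Delta_{j+i}\tilde\psi^{(\epsilon)}_{j,k}$. Because $\varphi$ is real and even, $\Delta_{j'}$ is self-adjoint, so
\[
\langle f,\tilde\psi^{(\epsilon)}_{j,k}\rangle=\sum_{|i|\le M}\langle \Delta_{j+i}f,\tilde\psi^{(\epsilon)}_{j,k}\rangle .
\]
I would justify this identity by duality. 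By Remark~\ref{rem:single-scale-two-sided}(c), $f\in L^p_w$ is tempered and the pairing with the Schwartz function $\tilde\psi_{j,k}$ is an absolutely convergent integral. Each $\Delta_{j+i}f=\varphi_{j+i}*f$ is defined pointwise and lies in $L^p_w$ by finiteness of the Besov norm. Fubini then moves the convolution onto $\tilde\psi_{j,k}$. Absolute integrability of $|f(y)||\varphi_{j+i}(y-x)||\tilde\psi_{j,k}(x)|$ follows again from Remark~\ref{rem:single-scale-two-sided}(c), since $|\varphi_{j+i}|*|\tilde\psi_{j,k}|$ has Schwartz-type decay.

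\emph{Summing the blocks.} Minkowski's inequality in $\ell^p$ over $k$ gives
\[
\Bigl(\sum_k|\langle f,\tilde\psi^{(\epsilon)}_{j,k}\rangle|^pw(Q_{j,k})\Bigr)^{1/p}\le\sum_{|i|\le M}\Bigl(\sum_k|\langle\Delta_{j+i}f,\tilde\psi^{(\epsilon)}_{j,k}\rangle|^pw(Q_{j,k})\Bigr)^{1/p}.
\]
Applying \eqref{eq:sse-analysis-general} with $g=\Delta_{j+i}f\in L^p_w$ bounds each summand, after multiplication by $2^{jn/2}$, by $K_L[w]_{A_p}^{1/p}\|\Delta_{j+i}f\|_{L^p_w}$. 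Definition~\ref{def:weighted-besov} then gives
\[
\|\Delta_{j+i}f\|_{L^p_w}\le 2^{-(j+i)s}\|f\|_{\dot B^s_{p,q}(w)}.
\]
Summing over $|i|\le M$ produces $\sum_{|i|\le M}2^{-is}=C_M$, using the symmetry noted after \eqref{eq:finite-band}. This yields, for each $\epsilon$,
\[
2^{jn/2}\Bigl(\sum_k|\cdot|^pw(Q_{j,k})\Bigr)^{1/p}\le C_MK_L[w]_{A_p}^{1/p}2^{-js}\|f\|_{\dot B^s_{p,q}(w)}.
\]

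\emph{Combining generators.} Taking the $p$-th power, summing over $\epsilon=1,\dots,L$, and taking the $1/p$ root gives the factor $L^{1/p}$, with $K_L$ replaced by its maximum over $\epsilon$. This is exactly \eqref{eq:coefficient-bound}.

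The main obstacle is the first step: justifying the interchange $\langle f,\Delta_{j'}\tilde\psi_{j,k}\rangle=\langle\Delta_{j'}f,\tilde\psi_{j,k}\rangle$ and the finite reproduction. This must be done for $f$ that is only in $L^p_w$, without relying on modulo-polynomial ambiguity. Since $f\in L^p_w$ is an honest function and the moments of $\tilde\psi$ vanish, the pairing is well defined, and the Fubini argument via Remark~\ref{rem:single-scale-two-sided}(c) handles the interchange.
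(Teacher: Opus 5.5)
Your proposal is correct and matches the paper's proof step for step. Both arguments use the same chain: the finite-band reproduction $\tilde\psi_{j,k}=\sum_{|i|\le M}\varphi_{j+i}*\tilde\psi_{j,k}$, the Fubini interchange justified by Remark~\ref{rem:single-scale-two-sided}(c) together with the realness and evenness of $\varphi$, Minkowski in $i$, the analysis bound \eqref{eq:sse-analysis-general} applied to each $\Delta_{j+i}f$, the single-block Besov bound summing to $C_M2^{-js}$, and the factor $L^{1/p}$ from the generator labels (your aside about vanishing moments of $\tilde\psi$ is not needed for the pairing to be well defined).
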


\begin{proof}[Proof of Lemma~\ref{lem:coefficient-bound}.]
	We fix $\epsilon$, $j$ and $k$, and we write $\tilde{\psi}$ for
	$\tilde{\psi}^{(\epsilon)}$. The Fourier transform of
	$\tilde{\psi}_{j,k}$ is supported in $\{2^ja\le|\xi|\le 2^jb\}$, so the
	finite-band property of \eqref{eq:finite-band} gives
	$\tilde{\psi}_{j,k}=\sum_{|i|\le M}\varphi_{j+i}*\tilde{\psi}_{j,k}$. The
	function $f$ belongs to $L^p_w$, and by
	Remark~\ref{rem:single-scale-two-sided}(c) its pairings with rapidly
	decreasing functions are absolutely convergent. Since $\varphi$ is
	real-valued and even (Definition~\ref{def:littlewood-paley}), Fubini's
	theorem gives
	$\langle f,\varphi_{j'}*\tilde{\psi}_{j,k}\rangle=\langle\varphi_{j'}*f,\tilde{\psi}_{j,k}\rangle$.
	Its application is justified because $|\varphi_{j'}|*|\tilde{\psi}_{j,k}|$
	is rapidly decreasing. Since $\varphi_{j'}*f=\Delta_{j'}f$,
	\begin{equation}\label{eq:coef-identity}
		\langle f,\tilde{\psi}_{j,k}\rangle
		=\sum_{|i|\le M}\langle\Delta_{j+i}f,\tilde{\psi}_{j,k}\rangle .
	\end{equation}
	Each $\Delta_{j+i}f$ belongs to $L^p_w$ because
	$2^{(j+i)s}\|\Delta_{j+i}f\|_{L^p_w}\le\|f\|_{\dot{B}^s_{p,q}(w)}$
	(Definition~\ref{def:weighted-besov}). We take the weighted $\ell^p$ norm
	in $k$ in \eqref{eq:coef-identity}, apply Minkowski's inequality in $i$,
	and apply \eqref{eq:sse-analysis-general} to each
	$g=\Delta_{j+i}f$. This gives
	\begin{align*}
		2^{jn/2}\left(\sum_{k}|\langle f,\tilde{\psi}_{j,k}\rangle|^p\,w(Q_{j,k})\right)^{1/p}
		&\le K_L[w]_{A_p}^{1/p}\sum_{|i|\le M}\|\Delta_{j+i}f\|_{L^p_w}\\
		&\le K_L[w]_{A_p}^{1/p}\,\|f\|_{\dot{B}^s_{p,q}(w)}\sum_{|i|\le M}2^{-(j+i)s}.
	\end{align*}
	The last sum equals $C_M2^{-js}$. Summing the $p$-th powers over the $L$
	generator labels contributes the factor $L^{1/p}$, and
	\eqref{eq:coefficient-bound} follows.
\end{proof}

Inequality \eqref{eq:coefficient-bound} is a scale-by-scale bound of the
kind provided qualitatively by the boundedness of the analysis operator in
\cite[Theorem~3.5]{Bownik2005}. Its constant is explicit,
carries the power $[w]_{A_p}^{1/p}$, and does not depend on $q$, so the inequality holds with the same constant for every $0<q<\infty$.

We now introduce the class on which the Jackson-type inequality holds.

\begin{definition}[Localized class]\label{def:localized-class}
	Let $Q_0:=[0,1)^n$ and
	$\mathcal{D}(Q_0):=\{(j,k)\in\mathbb{Z}\times\mathbb{Z}^n : j\ge 0,\ Q_{j,k}\subset Q_0\}$.
	For each $j\ge 0$ exactly $2^{jn}$ indices $k$ satisfy
	$(j,k)\in\mathcal{D}(Q_0)$, namely those with $0\le k_i\le 2^j-1$ for
	$1\le i\le n$. Let $1<p<\infty$ and $w\in A_p$. For $f\in L^p_w$ and an integer $J\ge 0$ put
	\begin{equation}\label{eq:tree-partial-sum}
		T_Jf:=\sum_{\epsilon=1}^{L}\sum_{j=0}^{J}\sum_{k:\,(j,k)\in\mathcal{D}(Q_0)}
		\langle f,\tilde{\psi}^{(\epsilon)}_{j,k}\rangle\,\psi^{(\epsilon)}_{j,k},
		\qquad T_{-1}f:=0,
	\end{equation}
	a finite sum whose coefficients are well defined by
	Remark~\ref{rem:single-scale-two-sided}(c). For $s>0$ and $0<q<\infty$, the localized class
	$\dot{B}^s_{p,q}(w;Q_0)$ consists of all $f\in L^p_w\cap\dot{B}^s_{p,q}(w)$
	such that $T_Jf\to f$ in $L^p_w$ as $J\to\infty$.
\end{definition}

\begin{remark}[On the localized class]\label{rem:localized-class}
	Every finite sum $f=\sum_{\epsilon}\sum_{(j,k)\in\Lambda}c^{(\epsilon)}_{j,k}\psi^{(\epsilon)}_{j,k}$
	with a finite set $\Lambda\subset\mathcal{D}(Q_0)$ belongs to
	$\dot{B}^s_{p,q}(w;Q_0)$. Such an $f$ belongs to $L^p_w$ by
	\eqref{eq:synthesis-any-generator}, and to $\dot{B}^s_{p,q}(w)$ by
	\eqref{eq:besov-single-scale}, applied to each of its finitely many single-scale parts with a fixed label and combined by the triangle inequality in $L^p_w$ and the quasi-triangle inequality in $\dot{B}^s_{p,q}(w)$. By \eqref{eq:biorthogonality},
	$T_Jf=f$ for all $J$ exceeding the largest generation in $\Lambda$. In
	particular the extremal family of Corollary~\ref{lem:lower-bound-constant}
	lies in the class.
\end{remark}

\begin{theorem}[Jackson-type inequality for $N$-term wavelet approximation]
	\label{thm:main1}
	Let Assumption~\ref{ass:wavelet-system} hold, with
	$\psi^{(\epsilon)}\in M^\tau$ and
	$\tilde{\psi}^{(\epsilon)}\in\mathcal{D}_\sigma$ for some $\tau>0$ and
	$\sigma>2n$. Let $s>0$, $1<p<\infty$, $0<q<\infty$, and $w\in A_p$. Then
	for every $f\in\dot{B}^s_{p,q}(w;Q_0)$ and every integer $N\ge 1$, the
	$N$-term approximation error \eqref{eq:nterm-error} relative to the
	system $\mathcal{F}=\{\psi_{j,k}\}$ satisfies
	\begin{equation}\label{eq:jackson-main}
		E_N^{\mathcal{F}}(f)_{L^p_w}
		\le C_0\,[w]_{A_p}^{2/p}\,N^{-s/n}\,\|f\|_{\dot{B}^s_{p,q}(w)}
		\le C_0\,[w]_{A_p}^{2\theta(p)}\,N^{-s/n}\,\|f\|_{\dot{B}^s_{p,q}(w)},
	\end{equation}
	where $\theta(p)$ is given by \eqref{eq:single-scale-theta} and
	\begin{equation}\label{eq:constant-form}
		C_0:=\frac{L\,(2L)^{s/n}}{1-2^{-s}}\,C_M\,K_U\,K_L,
		\qquad
		K_U:=\max_{\epsilon}K_U(\psi^{(\epsilon)}),
		\quad
		K_L:=\max_{\epsilon}K_L(\tilde{\psi}^{(\epsilon)}).
	\end{equation}
	Here $K_U$ and $K_L$ are the decay quantities of
	\eqref{eq:sse-upper-refined} and \eqref{eq:sse-analysis-zero}, bounded
	in closed form by \eqref{eq:sse-KU} and \eqref{eq:sse-KL}, and $C_M$ is
	given by \eqref{eq:finite-band}. The factor $C_0$ is determined by $n$, $L$, $p$, $s$, $M$, and the decay
	envelopes of the generators $\psi^{(\epsilon)}$ and $\tilde{\psi}^{(\epsilon)}$
	through the series $K_U$ and $K_L$. Replacing $K_U$ and $K_L$ by the
	maxima over $\epsilon$ of the right-hand sides of \eqref{eq:sse-KU} and
	\eqref{eq:sse-KL}, and $\mathrm{L}_\sigma(\tilde{\psi}^{(\epsilon)})$ by its
	bound in \eqref{eq:decay-pointwise}, yields a closed-form majorant of $C_0$
	that depends only on $n$, $L$, $p$, $s$, $M$, and the decay parameters
	$\tau,\sigma$ in the sense of Definition~\ref{def:decay-classes}. Both
	$C_0$ and this majorant are independent of $N$, $f$, $q$, $w$, and the Riesz
	bounds $A,B$.
\end{theorem}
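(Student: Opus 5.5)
The approximant will be the truncation in scale $T_Jf$ of \eqref{eq:tree-partial-sum}, with $J$ chosen from $N$. The number of terms of $T_Jf$ is $L\sum_{j=0}^{J}2^{jn}\le 2L\,2^{Jn}$, using $\sum_{j\le J}2^{jn}\le 2^{Jn}/(1-2^{-n})\le 2\cdot 2^{Jn}$. Given $N\ge 1$, we first treat the case $N\ge 2L$. There we let $J\ge 0$ be the largest integer with $2L\,2^{Jn}\le N$, so that $N<2L\,2^{(J+1)n}$, that is, $2^{-(J+1)s}<(2L)^{s/n}N^{-s/n}$. If $N<2L$, we take $J=-1$, so the approximant is $0$, and the same inequality $2^{-(J+1)s}=1<(2L)^{s/n}N^{-s/n}$ holds. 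In both cases $E_N^{\mathcal F}(f)_{L^p_w}\le\|f-T_Jf\|_{L^p_w}$.

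Next, since $f$ lies in the localized class, $T_{J'}f\to f$ in $L^p_w$. Hence
\[
f-T_Jf=\lim_{J'\to\infty}\sum_{j=J+1}^{J'}\sum_{\epsilon}r^{(j,\epsilon)},
\qquad
r^{(j,\epsilon)}:=\sum_{k:(j,k)\in\mathcal D(Q_0)}\langle f,\tilde\psi^{(\epsilon)}_{j,k}\rangle\psi^{(\epsilon)}_{j,k},
\]
and the triangle inequality in $L^p_w$ gives
\[
\|f-T_Jf\|_{L^p_w}\le\sum_{j>J}\sum_{\epsilon}\|r^{(j,\epsilon)}\|_{L^p_w}.
\]
Each $r^{(j,\epsilon)}$ is a finite single-scale sum, so the upper half of \eqref{eq:sse-refined} bounds it by $K_U[w]_{A_p}^{1/p}$ times the restricted coefficient quantity at scale $j$ for label $\epsilon$. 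That quantity is at most $S_j(f)$ from \eqref{eq:Sj-def}: dropping indices outside $\mathcal D(Q_0)$ and the other labels only decreases the sum. Lemma~\ref{lem:coefficient-bound} then yields
\[
\|r^{(j,\epsilon)}\|_{L^p_w}\le K_U\,L^{1/p}C_MK_L\,[w]_{A_p}^{2/p}\,2^{-js}\|f\|_{\dot B^s_{p,q}(w)}.
\]
Summing over the $L$ labels and over $j\ge J+1$ gives
\[
\|f-T_Jf\|_{L^p_w}\le L^{1+1/p}\,\frac{C_MK_UK_L}{1-2^{-s}}\,[w]_{A_p}^{2/p}\,2^{-(J+1)s}\,\|f\|_{\dot B^s_{p,q}(w)}.
\]
Inserting $2^{-(J+1)s}<(2L)^{s/n}N^{-s/n}$ produces the first inequality of \eqref{eq:jackson-main}, but with $L^{1+1/p}$ in place of $L$.

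The main obstacle is recovering exactly the factor $L$ of \eqref{eq:constant-form} rather than $L^{1+1/p}$. I would first try to avoid applying Lemma~\ref{lem:coefficient-bound} with the full $L^{1/p}$ for each label. For a fixed label, rerunning the proof of Lemma~\ref{lem:coefficient-bound} with only that label gives the bound without the factor $L^{1/p}$. Summing over labels then yields the factor $L$ exactly. The remaining points are routine. The tail bound needs $T_{J'}f\to f$ together with the finiteness of each single-scale sum, both of which are guaranteed. The second inequality in \eqref{eq:jackson-main} follows from $[w]_{A_p}\ge1$ and $2/p\le 2\theta(p)$, as noted after Lemma~\ref{lem:single-scale-estimate}. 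Independence from $q$, $w$ and $A,B$ holds because none of $K_U$, $K_L$, $C_M$ involves them, and the closed-form majorant follows from \eqref{eq:sse-KU}, \eqref{eq:sse-KL} and \eqref{eq:decay-pointwise}.
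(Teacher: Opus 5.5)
Your proposal is correct and follows the paper's proof: you take the truncation in scale $T_Jf$ along the dyadic tree of $Q_0$, apply the triangle inequality over generations, combine the upper estimate in \eqref{eq:sse-refined} with Lemma~\ref{lem:coefficient-bound}, and sum a geometric series, which yields exactly $C_0$. The differences are cosmetic: you choose $J$ from the cruder count $2L\,2^{Jn}$, with a separate case $N<2L$, instead of the exact count that defines $j_0$ in \eqref{eq:scale-choice}; and you obtain the factor $L$ from the per-label bound proved inside Lemma~\ref{lem:coefficient-bound}, summed over labels, whereas the paper uses H\"older's inequality in $\epsilon$ to get $L^{1/p'}\cdot L^{1/p}=L$.
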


\begin{proof}[Proof of Theorem~\ref{thm:main1}.]
	We construct an admissible approximant by truncation in scale along the
	dyadic tree of $Q_0$. The residual is then estimated generation by
	generation through \eqref{eq:sse-refined} and
	Lemma~\ref{lem:coefficient-bound}. No greedy selection is used.
	
	\emph{Step 1: the approximant.} We put
	\begin{equation}\label{eq:scale-choice}
		j_0:=\max\Bigl\{j\ge -1 : L\,\frac{2^{(j+1)n}-1}{2^n-1}\le N\Bigr\}.
	\end{equation}
	The set is nonempty because $j=-1$ gives $0\le N$, and it is bounded above
	because the left-hand side tends to infinity with $j$. We define
	$g_N:=T_{j_0}f$ by \eqref{eq:tree-partial-sum}. It is a combination of
	$L\sum_{j=0}^{j_0}2^{jn}=L(2^{(j_0+1)n}-1)/(2^n-1)\le N$ elements of
	$\mathcal{F}$, so it is admissible in \eqref{eq:nterm-error} and
	$E_N^{\mathcal{F}}(f)_{L^p_w}\le\|f-g_N\|_{L^p_w}$. By maximality of
	$j_0$ we have $L(2^{(j_0+2)n}-1)/(2^n-1)>N$. Since $2^n-1\ge 2^{n-1}$, the
	left-hand side is smaller than $2L\cdot 2^{(j_0+1)n}$. Hence
	$2^{(j_0+1)n}>N/(2L)$, and therefore
	\begin{equation}\label{eq:rate}
		2^{-(j_0+1)s}=\bigl(2^{(j_0+1)n}\bigr)^{-s/n}<(2L)^{s/n}\,N^{-s/n}.
	\end{equation}
	
	\emph{Step 2: the residual.} For $j\ge 0$ put
	$c^{(\epsilon)}_{j,k}:=\langle f,\tilde{\psi}^{(\epsilon)}_{j,k}\rangle$
	and
	$r^{(j)}:=\sum_{\epsilon}\sum_{k:\,(j,k)\in\mathcal{D}(Q_0)}c^{(\epsilon)}_{j,k}\psi^{(\epsilon)}_{j,k}$.
	Since $f\in\dot{B}^s_{p,q}(w;Q_0)$, we have
	$f-g_N=\lim_{J\to\infty}(T_Jf-T_{j_0}f)=\lim_{J\to\infty}\sum_{j=j_0+1}^{J}r^{(j)}$
	in $L^p_w$. The triangle inequality and the continuity of the norm give
	\begin{equation}\label{eq:triangle}
		\|f-g_N\|_{L^p_w}\le\sum_{j>j_0}\big\|r^{(j)}\big\|_{L^p_w}.
	\end{equation}
	
	\emph{Step 3: one generation.} Fix $j>j_0$, so $j\ge 0$. For each
	$\epsilon$, the part of $r^{(j)}$ with label $\epsilon$ is a single-scale
	sum over the finite set $\{k:(j,k)\in\mathcal{D}(Q_0)\}$. The upper
	estimate in \eqref{eq:sse-refined} applies to it. Summing over $\epsilon$
	and applying H\"older's inequality in $\epsilon$ gives
	\begin{equation}\label{eq:single-scale-applied}
		\big\|r^{(j)}\big\|_{L^p_w}
		\le K_U[w]_{A_p}^{1/p}\,L^{1/p'}\,S_j(f)
		\le L\,C_M\,K_U\,K_L\,[w]_{A_p}^{2/p}\,2^{-js}\,\|f\|_{\dot{B}^s_{p,q}(w)}.
	\end{equation}
	The first inequality uses that the sum over the tree indices is a subsum
	of the sum defining $S_j(f)$ in \eqref{eq:Sj-def}. The second inequality
	is \eqref{eq:coefficient-bound}, and $L^{1/p'}L^{1/p}=L$.
	
	\emph{Step 4: summation.} Inserting \eqref{eq:single-scale-applied} into
	\eqref{eq:triangle} and summing the geometric series, which converges
	because $s>0$, we obtain
	\[
	\|f-g_N\|_{L^p_w}
	\le L\,C_MK_UK_L\,[w]_{A_p}^{2/p}\,\|f\|_{\dot{B}^s_{p,q}(w)}\,
	\frac{2^{-(j_0+1)s}}{1-2^{-s}}.
	\]
	By \eqref{eq:rate} the right-hand side is at most
	$C_0[w]_{A_p}^{2/p}N^{-s/n}\|f\|_{\dot{B}^s_{p,q}(w)}$ with $C_0$ as in
	\eqref{eq:constant-form}. This proves the first inequality in
	\eqref{eq:jackson-main}. The second follows from $[w]_{A_p}\ge 1$ and
	$2/p\le 2\theta(p)$. The power $[w]_{A_p}^{2/p}$ is the product of the
	synthesis contribution in \eqref{eq:sse-refined} and the coefficient
	contribution in \eqref{eq:coefficient-bound}, each of which carries the
	power $[w]_{A_p}^{1/p}$.
\end{proof}

\begin{corollary}[Lower estimate for the approximation constant via an extremal function family]
	\label{lem:lower-bound-constant}
	Under the hypotheses of Theorem~\ref{thm:main1}, let $\psi:=\psi^{(1)}$,
	$\tilde{\psi}:=\tilde{\psi}^{(1)}$, and let $K_U$, $K_L$, $K_\eta$ be the
	constants of \eqref{eq:sse-upper-refined}, \eqref{eq:sse-analysis-zero}
	and \eqref{eq:besov-single-scale} for this pair. For every integer
	$m\ge 0$ and $N:=2^{mn}$ there exists a finite wavelet sum $f^{*}_N$ of
	generation $m+1$, belonging to $\dot{B}^s_{p,q}(w;Q_0)$, such that
	\begin{equation}\label{eq:lbc-twosided}
		c_\flat\,[w]_{A_p}^{-2/p}
		\le
		\frac{E_N^{\mathcal{F}}(f^{*}_N)_{L^p_w}}
		{N^{-s/n}\,\|f^{*}_N\|_{\dot{B}^s_{p,q}(w)}}
		\le
		c_\sharp\,[w]_{A_p}^{2/p},
	\end{equation}
	where
	\begin{equation}\label{eq:lbc-constants}
		c_\flat:=\frac{(1-2^{-n})^{1/p}\,2^{-s}}{K_L\,K_\eta\,C'_M},
		\qquad
		c_\sharp:=(1-2^{-n})^{1/p}\,2^{-s}\,K_U\,K_L\,C_M
	\end{equation}
	are independent of $N$ and $w$. Since $[w]_{A_p}\ge 1$ and
	$2/p\le 2\theta(p)$, the ratio in \eqref{eq:lbc-twosided} also lies
	between $c_\flat[w]_{A_p}^{-2\theta(p)}$ and
	$c_\sharp[w]_{A_p}^{2\theta(p)}$. In particular, for fixed $w$ the
	exponent $s/n$ in \eqref{eq:jackson-main} cannot be replaced by a larger
	exponent on $\dot{B}^s_{p,q}(w;Q_0)$.
\end{corollary}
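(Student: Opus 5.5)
The plan is to take a single-scale sum at generation $j:=m+1$ whose coefficients make every term of the weighted coefficient quantity of Lemma~\ref{lem:single-scale-estimate} equal. Then bound the approximation error from below through the analysis bound \eqref{eq:sse-analysis-general} and from above by discarding $N$ terms. Finally, compare with the Besov norm through \eqref{eq:besov-single-scale}.

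\emph{Construction.} Let $\Lambda_j:=\{k:(j,k)\in\mathcal{D}(Q_0)\}$, so $\#\Lambda_j=2^{jn}$. Put $c_k:=2^{-jn/2}w(Q_{j,k})^{-1/p}$ for $k\in\Lambda_j$, and set $f^{*}_N:=\sum_{k\in\Lambda_j}c_k\psi_{j,k}$ with label $\epsilon=1$. Then $|c_k|^pw(Q_{j,k})=2^{-jnp/2}$ for every $k$. For any subset $\Lambda'\subset\Lambda_j$ this gives
\[
2^{jn/2}\Bigl(\sum_{k\in\Lambda'}|c_k|^p\,w(Q_{j,k})\Bigr)^{1/p}=(\#\Lambda')^{1/p}.
\]
By Remark~\ref{rem:localized-class}, $f^{*}_N\in\dot{B}^s_{p,q}(w;Q_0)$. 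We will also use $N^{-s/n}=2^{-ms}=2^{s}\,2^{-js}$.

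\emph{Lower bound for $E_N$.} Let $g=\sum_{(\epsilon,j',k)\in\Gamma}d\,\psi^{(\epsilon)}_{j',k}$ be an arbitrary competitor with $\#\Gamma\le N$. It is a finite sum, so $f^{*}_N-g\in L^p_w$ by \eqref{eq:synthesis-any-generator}. By biorthogonality \eqref{eq:biorthogonality}, $\langle f^{*}_N-g,\tilde{\psi}^{(1)}_{j,k}\rangle=c_k$ for every $k\in\Lambda_j$ such that $(1,j,k)\notin\Gamma$. At least $2^{jn}-2^{mn}=(1-2^{-n})2^{jn}$ such $k$ remain. Apply \eqref{eq:sse-analysis-general} to $f^{*}_N-g$ and discard all other indices. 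This gives
\[
K_L[w]_{A_p}^{1/p}\|f^{*}_N-g\|_{L^p_w}\ge\bigl((1-2^{-n})2^{jn}\bigr)^{1/p},
\]
and taking the infimum over $g$ bounds $E_N$ from below. The right inequality of \eqref{eq:besov-single-scale} gives $\|f^{*}_N\|_{\dot{B}^s_{p,q}(w)}\le C'_MK_\eta[w]_{A_p}^{1/p}2^{js}2^{jn/p}$. Dividing, the factors $2^{jn/p}$ and $2^{\pm js}$ cancel against $N^{-s/n}=2^s2^{-js}$, leaving exactly $c_\flat[w]_{A_p}^{-2/p}$.

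\emph{Upper bound.} Choose $g$ to be the sum of any $N$ of the terms $c_k\psi_{j,k}$. The residual is a single-scale sum over $(1-2^{-n})2^{jn}$ indices, so the upper half of \eqref{eq:sse-refined} gives
\[
E_N\le K_U[w]_{A_p}^{1/p}\bigl((1-2^{-n})2^{jn}\bigr)^{1/p}.
\]
For the denominator, combine the left inequality of \eqref{eq:besov-single-scale} with the lower half of \eqref{eq:sse-refined}:
\[
\|f^{*}_N\|_{\dot{B}^s_{p,q}(w)}\ge C_M^{-1}2^{js}\|f^{*}_N\|_{L^p_w}\ge C_M^{-1}K_L^{-1}[w]_{A_p}^{-1/p}2^{js}2^{jn/p}.
\]
Dividing again yields $c_\sharp[w]_{A_p}^{2/p}$.

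\emph{Remaining points.} The statement with $2\theta(p)$ follows from $[w]_{A_p}\ge1$ and $2/p\le2\theta(p)$. For fixed $w$, the two-sided bound shows that $E_N(f^{*}_N)$ is comparable to $N^{-s/n}\|f^{*}_N\|$ along $N=2^{mn}$, so no larger exponent is possible.

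The main obstacle is the lower bound on $E_N$, since the competitor may use arbitrary scales and labels. This is handled because \eqref{eq:sse-analysis-general} at the single scale $j$ with label $1$ sees only the coefficients the competitor touches at that scale and label, and biorthogonality removes all others.
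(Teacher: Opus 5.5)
Your proposal is correct and follows the paper's proof essentially step by step: the same generation-$(m+1)$ tree sum with coefficients normalized by $w(Q_{m+1,k})^{-1/p}$, the same biorthogonality-plus-\eqref{eq:sse-analysis-general} lower bound for $E_N$, the same ``discard $N$ terms'' upper bound, and the same two-sided control of the Besov norm via \eqref{eq:besov-single-scale} and \eqref{eq:sse-refined}. The paper's coefficients carry an extra factor $2^{-(m+1)s}$, which is harmless because the ratio in \eqref{eq:lbc-twosided} is invariant under scaling $f$.
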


\begin{proof}[Proof of Corollary~\ref{lem:lower-bound-constant}.]
	We write $N=2^{mn}$ and build the extremal function at generation
	$\ell:=m+1$. Let $K:=\{k\in\mathbb{Z}^n : 0\le k_i<2^\ell\}$, so that
	$\#K=2^{\ell n}=2^nN$ and $(\ell,k)\in\mathcal{D}(Q_0)$ for every
	$k\in K$. We set
	\begin{equation}\label{eq:lbc-extremal}
		f^{*}_N:=\sum_{k\in K}a_k\,\psi_{\ell,k},
		\qquad
		a_k:=2^{-\ell(s+n/2)}\,w(Q_{\ell,k})^{-1/p}.
	\end{equation}
	Each $w(Q_{\ell,k})$ is finite and positive, so $a_k$ is well defined, and
	$f^{*}_N\in\dot{B}^s_{p,q}(w;Q_0)$ by
	Remark~\ref{rem:localized-class}. The normalization is matched to the
	exponent $p$, so that
	\begin{equation}\label{eq:lbc-summand}
		|a_k|^p\,w(Q_{\ell,k})=2^{-\ell p(s+n/2)},
		\qquad
		2^{\ell n/2}\left(\sum_{k\in K'}|a_k|^p\,w(Q_{\ell,k})\right)^{1/p}
		=(\#K')^{1/p}\,2^{-\ell s}
	\end{equation}
	for every $K'\subset K$. The summand is independent of $k$ and of $w$.
	
	The Besov norm of $f^{*}_N$ is controlled by \eqref{eq:besov-single-scale}
	and by the lower estimate in \eqref{eq:sse-refined}. Together with
	\eqref{eq:lbc-summand} for $K'=K$ they give
	\begin{equation}\label{eq:lbc-besov}
		C_M^{-1}K_L^{-1}[w]_{A_p}^{-1/p}\,(2^nN)^{1/p}
		\le\|f^{*}_N\|_{\dot{B}^s_{p,q}(w)}
		\le C'_M\,K_\eta\,[w]_{A_p}^{1/p}\,(2^nN)^{1/p}.
	\end{equation}
	
	For the upper bound on the error, fix $K_0\subset K$ with $\#K_0=N$ and
	put $g_0:=\sum_{k\in K_0}a_k\psi_{\ell,k}$. The residual
	$f^{*}_N-g_0$ is a single-scale sum over $(2^n-1)N$ indices, so the upper
	estimate in \eqref{eq:sse-refined} and \eqref{eq:lbc-summand} give
	\begin{equation}\label{eq:lbc-upper}
		E_N^{\mathcal{F}}(f^{*}_N)_{L^p_w}
		\le\|f^{*}_N-g_0\|_{L^p_w}
		\le K_U[w]_{A_p}^{1/p}\bigl((2^n-1)N\bigr)^{1/p}\,2^{-\ell s}.
	\end{equation}
	
	For the lower bound, let
	$g=\sum_{(\epsilon,j,k)\in\Lambda}d^{(\epsilon)}_{j,k}\psi^{(\epsilon)}_{j,k}$
	with $\#\Lambda\le N$, where $\epsilon$ ranges over all labels of
	Assumption~\ref{ass:wavelet-system}. By
	\eqref{eq:biorthogonality}, the coefficient
	$b_k:=\langle f^{*}_N-g,\tilde{\psi}_{\ell,k}\rangle$ equals $a_k$ for
	every $k\in K$ with $(1,\ell,k)\notin\Lambda$. There are at least $\#K-\#\Lambda\ge(2^n-1)N$ such
	indices. We apply \eqref{eq:sse-analysis-general} to $f^{*}_N-g$ and
	discard the remaining indices. Together with \eqref{eq:lbc-summand} this
	gives
	\begin{equation}\label{eq:lbc-lower-step}
		K_L[w]_{A_p}^{1/p}\,\|f^{*}_N-g\|_{L^p_w}
		\ge 2^{\ell n/2}\left(\sum_{k\in K}|b_k|^p\,w(Q_{\ell,k})\right)^{1/p}
		\ge\left[(2^n-1)N\right]^{1/p}\,2^{-\ell s}.
	\end{equation}
	Taking the infimum over $g$ yields
	$E_N^{\mathcal{F}}(f^{*}_N)_{L^p_w}\ge K_L^{-1}[w]_{A_p}^{-1/p}((2^n-1)N)^{1/p}2^{-\ell s}$.
	
	Since $2^{\ell n}=2^nN$, we have $2^{-\ell s}=2^{-s}N^{-s/n}$. Dividing
	\eqref{eq:lbc-upper} and \eqref{eq:lbc-lower-step} by
	$N^{-s/n}\|f^{*}_N\|_{\dot{B}^s_{p,q}(w)}$ and inserting
	\eqref{eq:lbc-besov}, the powers of $N$ cancel through
	\begin{equation}\label{eq:lbc-ratio}
		\frac{\bigl((2^n-1)N\bigr)^{1/p}\,2^{-s}N^{-s/n}}{N^{-s/n}\,(2^nN)^{1/p}}
		=\bigl(1-2^{-n}\bigr)^{1/p}\,2^{-s},
	\end{equation}
	and \eqref{eq:lbc-twosided} follows with the constants
	\eqref{eq:lbc-constants}. Finally, suppose that
	$E_N^{\mathcal{F}}(f)_{L^p_w}\le C'N^{-s'/n}\|f\|_{\dot{B}^s_{p,q}(w)}$
	held on $\dot{B}^s_{p,q}(w;Q_0)$ for some $s'>s$ and some $C'$
	independent of $N$ and $f$. Then
	\eqref{eq:lbc-twosided} would give
	$c_\flat[w]_{A_p}^{-2/p}\le C'N^{-(s'-s)/n}$ for every $N=2^{mn}$, which
	fails as $m\to\infty$.
\end{proof}

\begin{remark}[The localization cannot be removed]
	\label{rem:localization-necessary}
	Let $w\equiv 1$ and let $\psi:=\psi^{(1)}$. For $N\ge 1$ put
	$f_N:=\sum_{k\in K_N}\psi_{0,k}$ with
	$K_N:=\{1,\dots,2N\}\times\{0\}^{n-1}$. This is a finite single-scale sum
	of generation $0$ in $L^p\cap\dot{B}^s_{p,q}$, and it is not carried by the
	dyadic tree of $Q_0$. For every $g$ with at most $N$ terms, at least $N$
	of the $2N$ coefficients $\langle f_N-g,\tilde{\psi}_{0,k}\rangle$,
	$k\in K_N$, are equal to one. Hence \eqref{eq:sse-analysis-general} gives
	$E_N^{\mathcal{F}}(f_N)_{L^p}\ge K_L^{-1}N^{1/p}$, while
	\eqref{eq:besov-single-scale} gives
	$\|f_N\|_{\dot{B}^s_{p,q}}\le C'_MK_\eta(2N)^{1/p}$. Therefore
	$E_N^{\mathcal{F}}(f_N)_{L^p}\ge(2^{1/p}K_LK_\eta C'_M)^{-1}\|f_N\|_{\dot{B}^s_{p,q}}$
	for every $N$. Consequently, for $w\equiv 1$, no inequality of the form
	$E_N^{\mathcal{F}}(f)_{L^p}\le CN^{-\kappa}\|f\|_{\dot{B}^s_{p,q}}$
	with $\kappa>0$ and $C$ independent of $N$ and $f$ can hold on the whole
	of $L^p\cap\dot{B}^s_{p,q}$. For $n=1$, $p=q=2$ and the orthonormal
	Meyer system of Example~\ref{ex:meyer-systems}, the same family gives
	$E_N^{\mathcal{F}}(f_N)_{L^2}=2^{-1/2}\|f_N\|_{L^2}$ exactly.
\end{remark}

\backmatter

\bmhead{Declarations}

\begin{itemize}
	\item \textbf{Availability of data and materials:} Not applicable. Data sharing is not applicable to this article as no datasets were generated or analysed during the current study.
	\item \textbf{Competing interests:} The author declares that they have no competing interests.
	\item \textbf{Funding:} Not applicable. The author declares that no funds, grants, or other support were received during the preparation of this manuscript.
	\item \textbf{Authors' contributions:} K.-C.W. conceived the mathematical framework, performed the analytic derivations, and wrote the manuscript.
	\item \textbf{Acknowledgements:} The author expresses sincere gratitude to his colleagues for their valuable suggestions and linguistic assistance regarding the English writing of this manuscript. Furthermore, the author gratefully acknowledges the Department of Mathematics and Applied Mathematics, School of Information Engineering, Sanming University, and the IOT Application Engineering Research Center of Fujian Province Colleges and Universities for providing the essential research environment and support.
\end{itemize}

\bibliography{BS}
\end{document}